\theoremstyle{plain}
\newtheorem{Theorem}{Theorem}[section]
\newtheorem{theorem}[Theorem]{Theorem}
\newtheorem{lemma}[Theorem]{Lemma}
\newtheorem*{thm*}{Theorem}
\newtheorem*{cor*}{Corollary}
\newtheorem*{claim*}{Claim}
\theoremstyle{definition}
\newtheorem{definition}[Theorem]{Definition}
\newtheorem{example}[Theorem]{Example}
\newtheorem{Setting}[Theorem]{Setting}
\newtheorem{notation}[Theorem]{Notation}
\newtheorem{fact}[Theorem]{Fact}
\theoremstyle{remark}
\newcommand{\Proof}{\begin{proof}}
\newcommand{\Qed}{\end{proof}}
\numberwithin{equation}{Theorem}
 \def\e{\mathrm{e}}
\newcommand{\calD}{\mathcal{D}}
\newcommand{\calF}{\mathcal{F}}
\newcommand{\calN}{\mathcal{N}}
\newcommand{\calP}{\mathcal{P}}
\newcommand{\calS}{\mathcal{S}}
\newcommand{\fkm}{\mathfrak{m}}
\newcommand{\fkp}{\mathfrak{p}}
\newcommand{\fkq}{\mathfrak{q}}
\newcommand{\Q}{\mathfrak{q}}
\newcommand{\q}{\mathfrak{q}}
\newcommand{\m}{\mathfrak{m}}
\def\Ass{\operatorname{Ass}}
\def\Assh{\operatorname{Assh}}
\def\adeg{\operatorname{adeg}}
\def\reg{\operatorname{reg}}
\begin{document}

\setlength{\baselineskip}{17pt}
\title{On Hilbert coefficients and sequentially generalized Cohen-Macaulay modules }
\author{Nguyen Tu Cuong}

\address{Institute of Mathematics, Vietnam Academy of Science and Technology, 18 Hoang Quoc Viet Road, 10307 Hanoi, Vietnam}
\email{ntcuong@math.ac.vn}

\author{Nguyen Tuan Long}
\address{National Economics University, 207 Giai Phong Road, Hanoi, Vietnam }
\email{ntlong81@gmail.com}

 	\author[H. L. Truong]{Hoang Le Truong}
 	\address{Institute of Mathematics, VAST, 18 Hoang Quoc Viet Road, 10307
 		Hanoi, Vietnam}
 	\address{Thang Long Institute of Mathematics and Applied Sciences, Hanoi, Vietnam}
 	\email{hltruong@math.ac.vn\\
 		truonghoangle@gmail.com}
\thanks{{\it Key words and phrases:}
Hilbert coefficients, multiplicity,  sequentially Cohen-Macaulay, sequentially generalized Cohen-Macaulay, Arithmetic degree,  dimension filtration, distinguished parameter ideals. 
\endgraf
{\it 2020 Mathematics Subject Classification:}
13C14, 13D40, 13H15.
\endgraf
The first author was supported by Vietnam National Foundation for Science and Technology Development (NAFOSTED) under grant number 101.04-2020.10.
The last author was supported by B2022-TNA-25.
}
\begin{abstract}
This paper shows that if $R$ is a homomorphic image of a Cohen-Macaulay local ring, then $R$-module $M$ is sequentially generalized Cohen-Macaulay if and only if the difference between Hilbert coefficients and arithmetic degrees for all distinguished parameter ideals of $M$  are bounded.
\end{abstract}

\maketitle

\section{Introduction}
 Let $(R, \fkm)$ be a commutative Noetherian local ring, where $\fkm$ is the maximal ideal.
Let $M$ be a finitely generated $R$-module of dimension $d$. 
For an $\fkm$-primary ideal $I$ of $R$, it is well-known  that there are integers $\{\e_i(I;M)\}_{i=0}^d$, called the {\it Hilbert coefficients} of $M$ with respect to $I$, such that 
 	\begin{eqnarray*}
 		\ell_R(M/{I^{n+1}}M)=\e_0(I;M) \binom{n+d}{d}-\e_1(I;M) \binom{n+d-1}{d-1}+\cdots+(-1)^d \e_d(I;M)
 	\end{eqnarray*}
 	for  all $n \gg 0$. Here $\ell_R(N)$ denotes the length of an $R$-module $N$. 
 In particular, the leading coefficient  $e_0(I; M)$ is said to be the multiplicity of $M$ with respect to $I$ and $e_1(I;M)$ is called by Vasconselos (\cite{V2}) the Chern coefficient of $M$  with respect to $I$. 
 In 2008, Vasconcelos posed {\it the Vanishing Conjecture}:  $M$ is Cohen-Macaulay if and only if $\e_1(\q; M) = 0$ for some parameter ideal $\q$ of $M$.
It is shown that the relation between Cohen-Macaulayness and the Chern number of parameter ideals is quite surprising.  Motivated by some profound results of \cite{CGT,TaT20} and also by the fact that this is true for $M$ is unmixed as shown in \cite{GGHOPV}, it was asked whether the characterization of many classes of non-unmixed rings such as Buchsbaum rings, generalized Cohen-Macaulay rings, sequentially Cohen-Macaulay rings in terms of the Hilbert coefficients and other invariants of $M$ (see \cite{GO,CGT,Tr14,Tr17,OTY,TY}). The aim of our paper is to continue this research direction.

To state the results of this paper, first of all let us fix our notation and terminology.  First, a filtration $$\calD: M = D_0\supset  D_1\supset \ldots\supset D_t=W $$ of $R$-submodules of $M$ is  called the {\it dimension filtration} of $M$,  if  for all $1 \le i \le \ell$, $D_{i}$ is the largest submodule of $D_{i-1}$ with $\dim_RD_{i}<\dim_R D_{i-1}$, where $\dim_R(0) = - \infty$ for convention. We say that $M$ is {\it sequentially (generalized) Cohen-Macaulay}, 
if  $C_i=D_i/D_{i+1}$   is (generalized) Cohen-Macaulay for all $0 \le i \le \ell-1$.  A system of parameters $\underline x=x_1,x_2, \ldots, x_d$  of $M$ is said to be {\it distinguished}, if  $(x_j \mid d_{i} < j \le d)D_i=(0)$ for all $0\le i\le \ell$, where $d_i=\dim_R D_i$ (\cite[Definition 2.5]{Sch}). A parameter ideal $\q$ of $M$ is called  {\it distinguished}, if there exists a distinguished system $x_1,x_2, \ldots, x_d$ of parameters of $M$ such that $\q=(x_1,x_2, \ldots, x_d)$.
For each $i = 1, \ldots, s$, set 
$$\Lambda_{i}(M) = \{(-1)^ie_i(\fkq,M)-\adeg_i(\q;M) \mid \fkq \text{ is a distinguished parameter ideal of }  M \}, $$
where  $\adeg_i(I;M)=\sum\limits_{\begin{subarray}{l} 
  \fkp\in\Ass(M),\\ 
  \dim R/\fkp=i
\end{subarray}} \ell_{R_{\fkp}}(H^0_{\fkp R_{\fkp}}(M_{\fkp}))e_0(I;R/\fkp)$ is the {\it $i$-th arithmetic degree} of $M$ with respect to $I$ (see \cite{BM},\cite{V},\cite{V1}).
Then we have the following results as in Table \ref{table1}.
\begin{table}[h!]
\centering
		\begin{tabular}{ | l | c | c| c|c|c|c|c|c|}
		\hline
	$\Lambda_{1}(M) \subseteq (- \infty, 0]$ & $M$ & \cite{CGT}  \\ \hline 
	$ 0 \in \Lambda_{1}(M)$,  ($\star$)&  $M$ is Cohen-Macaulay     & \cite{GGHOPV}    \\ \hline
	$\left| \Lambda_{1}(M) \right| < \infty $, ($\star$) &  $M$ is generalized Cohen-macaulay & \cite{GGHOPV,GO}    \\ \hline
	$0 \in \Lambda_{i}(M) $ for all $i = 1, \ldots, d$  & $M$ is sequentially Cohen-Macaulay  &  \cite{CGT}  \\ \hline
	\end{tabular}
\caption{ Properties of a finitely generated module $M$ are carried by the behavior of the specific set. A symbol ($\star$) requires that the module $M$ be unmixed. }
\label{table1}
\end{table}

This paper aims to extend these results in the sequentially generalized Cohen-Macaulay case. The answer is affirmative, which we are eager to report in the present writing.

\medskip

\begin{theorem}[Theorem \ref{thmain}]\label{thm0}
Assume that $R$ is a homomorphic image of a Cohen-Macaulay local ring. Then the following statements are equivalent.
	\begin{enumerate}[$i)$] 
		\item$M$ sequentially generalized Cohen-Macaulay.
		
		\item The set $\Lambda_i(M)$ is finite for all $1 \le i \le d$.
		
	\end{enumerate}
\end{theorem}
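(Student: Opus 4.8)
The plan is to prove both implications by reducing, piece by piece, to the \emph{unmixed} case already recorded in Table~\ref{table1}: each quotient $C_k=D_k/D_{k+1}$ of the dimension filtration is unmixed of dimension $d_k$, and by the third row of the table the single Chern coefficient detects generalized Cohen-Macaulayness, i.e. $C_k$ is generalized Cohen-Macaulay if and only if $\{e_1(\fkq',C_k):\fkq' \text{ a parameter ideal of } C_k\}$ is finite. Since $M$ is sequentially generalized Cohen-Macaulay exactly when every $C_k$ is, the theorem reduces to showing that the finiteness of the sets $\Lambda_i(M)$ is equivalent to the boundedness of the Chern coefficients of all the pieces $C_k$.

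The bridge between the two is the behaviour of the Hilbert coefficients and the arithmetic degree along the filtration. First I would check that localizing at a prime $\fkp$ with $\dim R/\fkp=d_k$ annihilates every $D_j$ with $d_j<d_k$, so that $(C_k)_\fkp=(D_k)_\fkp=H^0_{\fkp R_\fkp}(M_\fkp)$; the associativity formula for multiplicities then identifies the arithmetic-degree term in $\Lambda_i(M)$ with the leading multiplicity $e_0(\fkq,C_k)$ of the matching piece. On the other side, the short exact sequences $0\to D_{k+1}\to D_k\to C_k\to 0$ together with the compatibility of a distinguished system of parameters with the filtration yield an additive expansion of $(-1)^ie_i(\fkq,M)$ whose leading term is that same multiplicity and all of whose remaining terms are lower Hilbert coefficients $e_j(\fkq,C_l)$ with $j\ge 1$. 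Thus the leading multiplicity cancels against the arithmetic degree, and $\Lambda_i(M)$ is, after this bookkeeping, controlled by the Chern coefficients $e_1(\fkq,C_l)$ of the pieces.

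The implication $i)\Rightarrow ii)$ is now immediate: if every $C_k$ is generalized Cohen-Macaulay then, by the unmixed case, each $|e_1(\fkq,C_l)|$---and indeed each $|e_j(\fkq,C_l)|$, $j\ge 1$---is bounded uniformly in $\fkq$ by the finite lengths $\ell(H^m_\fkm(C_l))$, so every $\Lambda_i(M)$ is a bounded, hence finite, subset of $\mathbb{Z}$. For $ii)\Rightarrow i)$ I would induct on $d$. Peeling off the top piece, the Chern coefficient $e_1(\fkq,C_0)$ of $C_0=M/D_1$ occurs in $\Lambda_1(M)$, so its finiteness forces $\{e_1(\fkq,C_0)\}$ to be bounded and hence, by the unmixed case, $C_0$ to be generalized Cohen-Macaulay; transferring the finiteness of the remaining $\Lambda_i(M)$ to the corresponding sets for the submodule $D_1$, whose dimension filtration is the tail $D_1\supset D_2\supset\cdots$, the inductive hypothesis makes $D_1$ sequentially generalized Cohen-Macaulay, and therefore $M$ is as well.

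The main obstacle is the descent in $ii)\Rightarrow i)$. The set $\Lambda_i(M)$ ranges only over \emph{distinguished} parameter ideals of $M$, whereas the unmixed criterion for a fixed $C_k$ demands boundedness over \emph{all} parameter ideals of $C_k$; one must therefore realize an arbitrary parameter ideal of $C_k$ as the truncation of a distinguished parameter ideal of $M$, and simultaneously control, uniformly in $\fkq$, the Artin--Rees corrections that separate the naive additivity $\ell(M/\fkq^{n+1}M)=\sum_k\ell(C_k/\fkq^{n+1}C_k)$ from the true identity. Both tasks rest on the hypothesis that $R$ is a homomorphic image of a Cohen-Macaulay local ring: it provides a dualizing complex, and hence the good finiteness behaviour of the $H^m_\fkm(C_k)$, and it guarantees that distinguished systems of parameters exist in the abundance and flexibility needed to carry out the lifting.
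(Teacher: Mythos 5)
Your reduction hinges on an ``additive expansion'' of $(-1)^ie_i(\fkq;M)$ into Hilbert coefficients of the pieces $C_k=D_k/D_{k+1}$, and this is the step that fails. For a submodule $N\subseteq M$ with $\dim N=s<d$ one only has $e_j(\fkq;M)=e_j(\fkq;M/N)$ for $j\le d-s-1$ together with a correction at $j=d-s$ (Lemma \ref{lms-e}(ii)); the coefficients $e_j(\fkq;M)$ with $j>d-s$ are \emph{not} determined by those of $M/N$ and $N$, because the Artin--Rees terms $(\fkq^{n+1}M\cap N)/\fkq^{n+1}N$ contribute precisely in those degrees. A clean expression of all the adjusted coefficients $a_i(\fkq;M)$ in terms of the filtration is available only for parameter ideals generated by dd-sequences (Lemma \ref{f-dd}, quoted from \cite{CC2}), not for arbitrary distinguished parameter ideals. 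This is why the paper does not argue piecewise: for $i)\Rightarrow ii)$ it proves the uniform bound directly on $M$ (Theorem \ref{claim39}), by induction on $d$ via superficial elements, using the uniform Castelnuovo--Mumford regularity bound of \cite{CLT} and the nonnegativity of the adjusted Hilbert--Samuel function from \cite{Lo}. Your one-line claim that each $|e_j(\fkq;C_l)|$ is ``bounded by the finite lengths $\ell(H^m_\fkm(C_l))$'' is itself a theorem of that same depth, and in any case does not transfer to $M$ without the missing additivity.

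The converse contains the second gap, which you name but do not close. Not every parameter ideal of $C_k$ is induced by a distinguished parameter ideal of $M$ (a parameter element for $C_k$ may lie in a minimal prime of $\Supp M$ of dimension $d$ and then cannot be extended to a system of parameters of $M$), so the unmixed criterion of \cite{GO,GGHOPV} cannot be applied to $C_k$ using only the family of ideals you control. The paper's route is different: it fixes one well-chosen distinguished system $\underline x$ (Lemma \ref{lm41}), restricts to the subfamily $\underline x^{\underline n}$ with $\underline n\in\calN(\underline x;M)$, and shows in Theorem \ref{thm2} --- by induction on $d$, cutting by a superficial element and using the d-/dd-sequence formalism --- that a uniform bound $|a_i(\fkq^{\underline n};M)|\le C$ on this subfamily already forces $\fkm^{C}H^j_\fkm(M/D_{k+1})=0$ for the relevant $j,k$; one then concludes by the local-cohomology characterization of sequentially generalized Cohen--Macaulay modules in \cite[Proposition 3.5]{CC2}. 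As written, your proposal does not supply the ingredients needed for either implication.
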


The paper is divided into four sections. The next section presents some preliminaries. In Section 3, we prove that if $M$ is a sequentially generalized Cohen-Macaulay module, then the set $\Lambda_i(M)$ is finite for all $ i=1\ldots, d$. A characterization of sequentially generalized Cohen-Macaulay modules by the finiteness of  $\Lambda_i(M)$ will be shown in the last section.
\section{Preliminaries}
 In what follows, throughout this paper, let $(R, \fkm, k)$ be a Noetherian local ring, where $\fkm$ is the maximal ideal and $k = R/\fkm$
 is the infinite residue field of $R$. Suppose that $R$ is a homomorphic image of a Cohen-Macaulay local ring. Let
 $M$ be a finitely generated $R$-module of dimension $d$.

\begin{definition}[\cite{CN},\cite{CC1}, \cite{Sch}] 
\begin{enumerate}[$i)$]
\item We say that a finite filtration of submodules of $M$ $$\calF:M=M_0 \supset M_{1} \supset\ldots\supset M_s$$ {\it satisfies the dimension condition} if $\dim M_i >\dim
M_{i+1} $, for all $i=0,\ldots, s-1$ and we say this case that the filtration $\calF$ has the length $s$.\\ 
\item A filtration  $$\calD: M = D_0\supset  D_1\supset \ldots\supset D_t=W $$ of
 submodules of $M$ is said to be the {\it dimension filtration}  if $D_i$ is the largest submodule of $D_{i-1}$ with $\dim D_i < \dim D_{i-1}$ for all $i = 1,\ldots,t.$ 
Note that the dimension filtration always exists uniquely (see \cite{CN}).
\end{enumerate}
\end{definition}

\begin{notation}
	\begin{enumerate}[$\bullet$]
	\item $t$ : the length of the dimension filtration of $M$, 
	\item $\calD=\{D_i\}_{i=0}^t$ : the dimension filtration of $M$,
	\item $d_i=\dim D_i$ for all $i=0,\ldots,t$,
	\item $\calF=\{M_i\}_{i=0}^s$ : a filtration of submodules of $M$ of length $s$ satisfying the dimension condition,
	\item $\calF/x\calF=\{ ({M}_i+xM)/xM\}_{i=0}^k$ : a filtration of submodules of $M/xM$, where $x$ is a parameter element of $M$ and
		$k=\left\{ \begin{gathered}
 s-1 \text{ if } \dim(M_{s-1})=1, \hfill \\
  s \quad\quad\text{ otherwise},\hfill \\ 
\end{gathered}  \right.$
	\item $\calF(M)=\{\calF=\{M_i\}_{i=0}^t\mid \ell(D_i/M_i)<\infty \text{ for all } i=0,\ldots,t \}$.

	\end{enumerate} 
	\end{notation}

\begin{definition} 
$i)$ A system of parameters $x_1,\ldots,x_d$ of $M$
is called  a {\it distinguished system of parameters with respect to} $\calF$  if $(x_{\dim M_i+1},\ldots,x_d)M_i=0$ for all $i=1,\ldots,s$. A distinguished system of parameters of $M$ with respect to $\calD$ is simply called a {\it distinguished system of parameters} of $M$. An ideal $\Q$ is said to be a {\it distinguished parameter ideal of $M$ with respect to}  $\calF$  if it is generated by a  distinguished system of parameters of $M$ with respect to $\calF$. A distinguished parameter ideal of $M$ with respect to  $\calD$ is simply called a {\it distinguished parameter ideal} of $M$ (\cite{Sch}).

$ii)$ A system of parameters $x_1,\ldots,x_d$ is called \textit{good system of parameters} of $M$ if 
$(x_{d_i+1},\ldots,x_d)M \cap D_i=0$ for all $i=1,\ldots,t$.  An ideal $\Q$ is said to be a {\it good parameter ideal of $M$}   if it is generated by a  good system of parameters of $M$  (see \cite{CC1}).

\end{definition}

Recall that there always exists distinguished systems of parameters of $M$ with respect to $\calF$ (see \cite[Lemma 2.6]{Sch}). 
Note that, if $x_1,x_2,\ldots,x_d$ is a distinguished system of parameters of $M$ with respect to $\calF$, then $x_2,\ldots,x_d$  is also a distinguished system of parameters of $M/x_1M$  with respect to $\calF/{x_1}\calF$.
Clearly, a good system of parameters is a distinguished system of parameters.

\begin{definition}[\cite{BM},\cite{V},\cite{V1}] Let $I$ be an $\m$-primary ideal of $R$.
The {\it $i$-th arithmetic degree} of $M$ with respect to $I$ is defined by
$$\adeg_j(I;M)=\sum\limits_{\fkp\in\Ass(M),\:\dim R/\fkp=j}\ell(H^0_{\fkp R_{\fkp}}(M_{\fkp}))e_0(I;R/\fkp).$$
 
\end{definition}

The following result is an immediate consequence of Proposition 3.2 in \cite{CGT}.
\begin{lemma}[{\rm c.f. \cite[Proposition 3.2]{CGT}}]\label{lm2}
Let $I$ be an $\m$-primary ideal of $R$ and $\calF\in \calF(M)$. Then 
$$\adeg_j(I;M)=\left\{ \begin{gathered}
  \ell(H^0_{\m}(M))\quad\,\,\text{ if } i=0, \hfill \\
  e_0(I;M_i)\quad\quad \text{ if  }  d_i=j \text{ for some } i=0,\ldots,t-1,  \hfill \\
  0 \quad\quad\quad\quad\quad\,\,\text{ otherwise}. \hfill \\ 
\end{gathered}  \right.$$
\end{lemma}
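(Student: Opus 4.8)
The plan is to compute both sides through the associativity formula for multiplicities and to match them prime by prime in each fixed dimension. First I would record that the dimensions occurring as $\dim R/\fkp$ for $\fkp\in\Ass(M)$ are exactly $\{d_0,\ldots,d_t\}$; hence if $j\notin\{d_0,\ldots,d_t\}$ there is no associated prime of dimension $j$, and both the definition of $\adeg_j$ and the asserted value vanish. The case $j=0$ is immediate from the definition: the only prime with $\dim R/\fkp=0$ is $\fkm$, for which $R_\fkm=R$ and $e_0(I;R/\fkm)=\ell(R/\fkm)=1$, so $\adeg_0(I;M)=\ell(H^0_\fkm(M))$. It then remains to treat $j=d_i$ with $0\le i\le t-1$, and for such $i$ one has $d_i\ge d_{t-1}>d_t\ge 0$, so $d_i\ge 1$ (this positivity is what will let me pass from $D_i$ to $M_i$ at the end).

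The heart of the argument is a local identification of the dimension filtration with a torsion submodule, which I would isolate as a \emph{Claim}: for every prime $\fkp$ with $\dim R/\fkp=d_i$ one has $D_{i,\fkp}=H^0_{\fkp R_\fkp}(M_\fkp)$. To prove it I use the characterization of $D_i$ as the largest submodule of $M$ with $\dim\le d_i$, which follows from the defining maximality of the filtration. Since $\dim D_i=d_i=\dim R/\fkp$ and $R$ is catenary (being a homomorphic image of a Cohen-Macaulay ring), any prime $\fkq\subseteq\fkp$ in $\Supp(D_i)$ satisfies both $\dim R/\fkq\le d_i$ and $\dim R/\fkq\ge\dim R/\fkp=d_i$, forcing $\fkq=\fkp$; thus $D_{i,\fkp}$ is $\fkp R_\fkp$-torsion, giving $D_{i,\fkp}\subseteq H^0_{\fkp R_\fkp}(M_\fkp)$. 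For the reverse inclusion, $M/D_i$ has no nonzero submodule of dimension $\le d_i$ (else its preimage would be a submodule of $M$ of dimension $\le d_i$ strictly containing $D_i$), so $\fkp\notin\Ass(M/D_i)$ and $(M/D_i)_\fkp$ has no $\fkp R_\fkp$-torsion; applying $H^0_{\fkp R_\fkp}(-)$ to $0\to D_{i,\fkp}\to M_\fkp\to (M/D_i)_\fkp\to 0$ and using that $D_{i,\fkp}$ is already torsion yields $H^0_{\fkp R_\fkp}(M_\fkp)=D_{i,\fkp}$.

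With the Claim in hand I would assemble the equality. The primes of dimension $d_i$ in $\Ass(M)$ are precisely the top-dimensional associated primes of $D_i$, and being of maximal dimension they are exactly the minimal primes of $\Supp(D_i)$ of dimension $d_i$; so the associativity formula reads
$$ e_0(I;D_i)=\sum_{\dim R/\fkp=d_i}\ell_{R_\fkp}(D_{i,\fkp})\,e_0(I;R/\fkp), $$
the sum being over $\fkp\in\Ass(M)$ with $\dim R/\fkp=d_i$. Substituting the Claim turns the right-hand side into $\adeg_{d_i}(I;M)$. Finally, since $\ell(D_i/M_i)<\infty$ and $d_i\ge 1$, additivity of multiplicity along $0\to M_i\to D_i\to D_i/M_i\to 0$ gives $e_0(I;D_i)=e_0(I;M_i)$, and hence $\adeg_{d_i}(I;M)=e_0(I;M_i)$, as asserted.

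The main obstacle is the local identification $D_{i,\fkp}=H^0_{\fkp R_\fkp}(M_\fkp)$ in the Claim: this is the only step that genuinely uses the structure of the dimension filtration (its maximality, hence the absence of low-dimensional associated primes in $M/D_i$) together with the catenarity coming from $R$ being a quotient of a Cohen-Macaulay ring, both needed to control how dimensions behave under localization at $\fkp$. Everything else is the associativity formula and an elementary multiplicity computation, and this is precisely the point at which the statement lines up with \cite[Proposition 3.2]{CGT}, from which the passage to an arbitrary $\calF\in\calF(M)$ costs only the final length-finite adjustment.
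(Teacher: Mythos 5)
Your proof is correct, and it is essentially the argument the paper delegates to the citation: the paper offers no proof of this lemma beyond declaring it an immediate consequence of \cite[Proposition 3.2]{CGT}, and your computation (the local identification $D_{i,\fkp}=H^0_{\fkp R_\fkp}(M_\fkp)$ from the maximality of the dimension filtration, the associativity formula, and the final passage from $D_i$ to $M_i$ via $\ell(D_i/M_i)<\infty$) is exactly what underlies that reference together with the finite-length adjustment. One minor remark: catenarity is not actually needed where you invoke it, since $\fkq\subsetneq\fkp$ already forces $\dim R/\fkq>\dim R/\fkp$ in any Noetherian local ring.
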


\begin{lemma}\label{lmf-a} 
Let  $\calF\in\calF(M)$ and $\q$ a parameter ideal of $M$. Suppose that there exists a filter regular element $x\in\q$ of $M$ 
such that $\calF/x\calF\in \calF (M/xM)$. We have $$\adeg_j(\Q;M/xM)=\adeg_{j+1}(\Q;M),$$ for all $j\geq 1$. Moreover, if $\q:=(x:=x_1,x_2,\ldots,x_d)$ is a distinguished parameter ideal of $M$ with respect to $\calF$, then $\adeg_0(\Q;M/xM)\geq \adeg_1(\Q;M).$
\end{lemma}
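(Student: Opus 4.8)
The plan is to pass everything through Lemma~\ref{lm2}, which turns each arithmetic degree into a multiplicity of a piece of the given filtration, and then to study the section by $x$ one associated prime at a time. Since $\calF\in\calF(M)$ and, by hypothesis, $\calF/x\calF\in\calF(M/xM)$, Lemma~\ref{lm2} gives $\adeg_{j+1}(\q;M)=e_0(\q;M_i)$ whenever $d_i=j+1$ (and $0$ otherwise) and $\adeg_{j}(\q;M/xM)=e_0(\q;(M_i+xM)/xM)$ whenever $\dim((M_i+xM)/xM)=j$. The one fact that drives everything is the containment
\[
\Ass(M/M_i)\setminus\{\fkm\}\subseteq \Ass(M/D_i)=\{\fkp\in\Ass(M)\mid \dim R/\fkp>d_i\}\subseteq \Ass(M),
\]
which holds because $D_i$ is the largest submodule of dimension $\le d_i$ and $\ell(D_i/M_i)<\infty$. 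Together with filter regularity of $x$ on $M$ this shows $x\notin\fkp$ for every $\fkp\in\Ass(M/M_i)$ with $\fkp\neq\fkm$; that is, $x$ is also a filter regular element of $Q:=M/M_i$, so $(0:_Q x)$ has finite length.

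For $j\ge 1$ I would first align the indices. For $\fkp\neq\fkm$ we have $(0:_Mx)_\fkp=0$, so $x$ is a nonzerodivisor on $M_\fkp$; localizing at a minimal prime of $M_i$ of dimension $d_i$ then annihilates $(M_i+xM)/xM$, whence $\dim((M_i+xM)/xM)=d_i-1$ and the piece of $\calF/x\calF$ of dimension $j$ is the image of the piece $M_i$ with $d_i=j+1$. Next, $(M_i\cap xM)/xM_i$ is a quotient of $(0:_Qx)$, hence of finite length, so (as $d_i-1\ge 1$) $e_0(\q;(M_i+xM)/xM)=e_0(\q;M_i/xM_i)$. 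Finally I would invoke the reduction formula $e_0(\q;M_i/xM_i)=e_0(\q;M_i)$ for the filter regular element $x$ of the \emph{parameter} ideal $\q$ in dimension $\ge 2$: after passing to $M_i/H^0_{\fkm}(M_i)$, on which $x$ is a genuine nonzerodivisor, one applies the associativity formula together with Serre's description of the multiplicity of a parameter ideal as a Koszul--Euler characteristic, which reduces along the nonzerodivisor $x$; catenarity of $R$ (a homomorphic image of a Cohen--Macaulay ring) enters in the coheight bookkeeping. This gives $\adeg_j(\q;M/xM)=\adeg_{j+1}(\q;M)$, the empty cases matching since the dimensions of the pieces of $\calF/x\calF$ are exactly $\{d_i-1\}$.

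For the last assertion $j=0$, Lemma~\ref{lm2} reads $\adeg_0(\q;M/xM)=\ell(H^0_{\fkm}(M/xM))$ and $\adeg_1(\q;M)=e_0(\q;M_i)$ with $d_i=1$ (if no such $i$ exists the inequality is trivial). Distinguishedness of $\q$ with respect to $\calF$ forces $(x_2,\dots,x_d)M_i=0$, hence $\q^{\,n}M_i=x^{\,n}M_i$ and $e_0(\q;M_i)=e_0((x);M_i)=\ell(M_i/xM_i)-\ell(0:_{M_i}x)\le \ell(M_i/xM_i)$. Applying the snake lemma to multiplication by $x$ on $0\to M_i\to M\to Q\to 0$ produces a finite-length submodule $K=(M_i+xM)/xM\subseteq H^0_{\fkm}(M/xM)$ and a short exact sequence $0\to K\to M/xM\to Q/xQ\to 0$; taking $H^0_{\fkm}$ yields
\[
\ell(H^0_{\fkm}(M/xM))=\ell(K)+\ell(H^0_{\fkm}(Q/xQ)).
\]
The same snake sequence bounds $\ell(K)\ge \ell(M_i/xM_i)-\ell(0:_Qx)$, while the local cohomology sequence of $0\to xQ\to Q\to Q/xQ\to 0$, with $(0:_Qx)$ of finite length, gives $\ell(H^0_{\fkm}(Q/xQ))\ge \ell(0:_Qx)$. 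Adding these, the two occurrences of $\ell(0:_Qx)$ cancel and $\ell(H^0_{\fkm}(M/xM))\ge \ell(M_i/xM_i)\ge e_0(\q;M_i)$, the desired inequality.

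The main obstacle is the multiplicity comparison under the section by $x$. Because $x$ is only filter regular and need not be superficial for $\q$, the equality $e_0(\q;M_i/xM_i)=e_0(\q;M_i)$ cannot be obtained from a generic hyperplane argument and instead rests on $\q$ being a parameter ideal, via the Koszul--Euler-characteristic form of its multiplicity. The second delicate point is conceptual rather than computational: the passage from equality for $j\ge 1$ to inequality for $j=0$ is caused precisely by the finite-length module $(0:_Qx)$, which vanishes after localizing at any prime $\neq\fkm$ (forcing equality off the top) but survives at $\fkm$, producing the slack $\ell(H^0_{\fkm}(Q/xQ))-\ell(0:_Qx)\ge 0$ that turns the $j=0$ statement into an inequality.
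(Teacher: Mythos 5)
Your argument is correct and takes essentially the same route as the paper: both pass through Lemma \ref{lm2} to rewrite each arithmetic degree as a multiplicity of a filtration piece and then compare $e_0(\q;-)$ across the section by $x$, with the dimension-one piece embedded in $H^0_{\fkm}(M/xM)$ giving the $j=0$ inequality. The only real difference is that the paper replaces $\calF$ by the dimension filtration $\calD$, on whose quotients $M/D_i$ the filter-regular element $x$ is genuinely regular (so $(D_i+xM)/xM\cong D_i/xD_i$ on the nose), whereas you stay with $\calF$ and absorb the resulting finite-length errors $(M_i\cap xM)/xM_i$ and $(0:_{M/M_i}x)$ by hand; both versions ultimately rest on the same reduction $e_0(\q;N/xN)=e_0(\q;N)$, which (as your Koszul--Euler sketch implicitly requires, and as the paper leaves unsaid) uses that $x$ is one of the chosen generators $x_1,\dots,x_d$ of $\q$ rather than an arbitrary filter-regular element of $\q$.
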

\begin{proof}

Since $\calF/x\calF\in\calF(M/xM)$, 
we have 
 $\calD/xM\in\calF(M/xM)$. Moreover, the length of $\calD/xM$ is   $t-1$ if $d_{t-1}=1$ and $t$ otherwise. Since $x$ is a filter regular element of $M$, $x$ is a regular element of $M/D_i$ for all $i=1,\ldots,t$. 
Thus we have $xM\cap D_i=xD_i$. By Lemma \ref{lm2} and $\calD/xM \in \calF(M/xM)$, we have
$$\begin{aligned}
\adeg_{d_i-1}(\q ;M/xM)&=e_0(\q;(D_i +xM)/xM)\\
&=e_0(\q ;D_i /xD_i)=e_0(\q ;D_i)=\adeg_{d_i}(\q ;M)
\end{aligned}$$
and $\adeg_j(\q;M)=0=\adeg_{j-1}(\q;M/xM)$
for all $i=0,\ldots,t-1$ and $2\le d_i<j<d_{i-1}$.  Hence $$\adeg_j(\Q;M/xM)=\adeg_{j+1}(\Q;M),$$ for all $j\geq 1$.

Now, if $d_{t-1}>1$, then $\adeg_1(\q;M)=0\leq \adeg_0(\q;M)$. So we can assume  that $d_{t-1}=1$. Since $\q=(x,x_2,\ldots,x_d)$ is a distinguished ideal of $M$ with respect to $\calF\in\calF(M)$, by Lemma \ref{lm2} we get
$$
\begin{aligned}
\adeg_1(\q;M)&=e_0(\q; M_{t-1})=e_0(x;M_{t-1})
=e_0(x;D_{t-1})\leq \ell(D_{t-1}/xD_{t-1}).
\end{aligned}$$
Since $H^0_{\m}(M/xM) \supseteq (D_{t-1}+xM)/xM\cong D_{t-1}/(D_{t-1}\cap xM)=D_{t-1}/xD_{t-1}$, we have
  $$\adeg_1(\q;M)\leq \ell(D_{t-1}/xD_{t-1})\leq \ell(H^0_{\m}(M/xM)) = \adeg_0(\q;M/xM),$$
as required.
\end{proof}

The following lemma was proved by \cite[Lemma 3.3, Lemma 3.4]{CGT}.
\begin{lemma}\label{lms-e} The following statements are true. \\
$(i)$ Assume that $d\geq 2$. Let $x$  be a superficial element of $M$ for a parameter ideal $\q$ of $M$. Then
$$e_{i}(\q;M)=e_{i}(\q;M/xM)$$
for all $i=0,\ldots,d-2$ and $(-1)^{d-1}e_{d-1}(\q;M)=(-1)^{d-1}e_{d-1}(\q;M/xM)+\ell(0:_Mx)$.\\
$(ii)$ Let $N$ be a submodule of $M$ with $\dim N = s < d$ and $I$ an $\m$-primary ideal of $R$. Then
$$e_j(I;M)= \left\{ \begin{gathered}
  e_j(I;M/N) ,\text{ if } 0\leq j\leq d-s-1, \hfill \\
  e_{d-s}(I;M/N) +(-1)^{d-s}e_0(I,N) \text{ if } j=d-s. \hfill \\ 
\end{gathered}  \right.$$
\end{lemma}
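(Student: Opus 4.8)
The plan is to deduce both parts from the additivity of the Hilbert--Samuel function along a short exact sequence, together with the principle that passing to the relevant quotient only perturbs the \emph{bottom} coefficients of the Hilbert polynomial; the whole proof then reduces to extracting coefficients in the binomial basis.

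For part $(i)$ I would start from the four-term exact sequence given by multiplication by $x$,
$$0 \to (\q^{n+1}M :_M x)/\q^n M \to M/\q^n M \xrightarrow{\ x\ } M/\q^{n+1}M \to M/(xM+\q^{n+1}M) \to 0,$$
whose right-hand term is $(M/xM)/\q^{n+1}(M/xM)$. Taking the alternating sum of lengths gives
$$\ell\big((M/xM)/\q^{n+1}(M/xM)\big) = \ell(M/\q^{n+1}M) - \ell(M/\q^n M) + \ell\big((\q^{n+1}M :_M x)/\q^n M\big).$$
The essential input is superficiality: for $n \gg 0$ one has $\q^{n+1}M :_M x = \q^n M + (0 :_M x)$, and since $x$ is superficial with $\dim M \ge 2$ the module $0 :_M x$ has finite length and $(0 :_M x)\cap \q^n M = 0$ for $n \gg 0$, so the last term stabilizes to the constant $\ell(0 :_M x)$. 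Writing $P_M$ for the Hilbert polynomial of $M$, this yields the polynomial identity $P_{M/xM}(n) = P_M(n) - P_M(n-1) + \ell(0 :_M x)$ for $n \gg 0$. Since $P_M(n) - P_M(n-1) = \sum_{i=0}^{d-1}(-1)^i e_i(\q;M)\binom{n+d-1-i}{d-1-i}$, expanding both sides in the binomial basis $\{\binom{n+d-1-i}{d-1-i}\}_{i=0}^{d-1}$ and matching coefficients gives $e_i(\q;M)=e_i(\q;M/xM)$ for $0\le i\le d-2$, while the bottom (constant) coefficient, corresponding to $i=d-1$, carries the extra term $\ell(0:_Mx)$, which is the remaining equality.

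For part $(ii)$ I would instead use the short exact sequence $0 \to N/(N\cap I^{n+1}M) \to M/I^{n+1}M \to (M/N)/I^{n+1}(M/N) \to 0$ obtained from $0\to N\to M\to M/N\to 0$, giving $\ell(M/I^{n+1}M) = \ell\big((M/N)/I^{n+1}(M/N)\big) + \ell\big(N/(N\cap I^{n+1}M)\big)$. By Artin--Rees the filtration $\{N\cap I^{n+1}M\}_n$ is $I$-stable and cofinal with $\{I^{n+1}N\}_n$, so $\ell(N/(N\cap I^{n+1}M))$ agrees for $n\gg 0$ with a polynomial of degree exactly $s=\dim N$ whose leading coefficient, written in the binomial basis, is the multiplicity $e_0(I;N)$. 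Because this correction polynomial has degree $s$, it only affects the coefficients of $\binom{n+d-j}{d-j}$ with $d-j\le s$, i.e. $j\ge d-s$; hence $e_j(I;M)=e_j(I;M/N)$ for $0\le j\le d-s-1$, and at $j=d-s$ the leading term $e_0(I;N)$ of the correction produces the summand $(-1)^{d-s}e_0(I;N)$.

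The main obstacle in both parts is not the length bookkeeping but the two stability statements that justify replacing the genuine colon/intersection filtrations by their expected values: the identity $\q^{n+1}M :_M x = \q^n M + (0 :_M x)$ for superficial $x$ (and the finiteness of $\ell(0 :_M x)$) in part $(i)$, and the Artin--Rees comparison of $\{N\cap I^{n+1}M\}_n$ with $\{I^{n+1}N\}_n$, together with the fact that cofinal $I$-stable filtrations share the same multiplicity, in part $(ii)$. Once these are in place the argument is purely formal: one expands the two Hilbert polynomials in the binomial basis and reads off the coefficients, where the only delicate point is keeping track of the alternating signs $(-1)^i$ and of which basis elements the degree-restricted correction term can reach.
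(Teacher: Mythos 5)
Your argument is correct and is the standard one; the paper itself gives no proof of this lemma but cites \cite[Lemmas 3.3 and 3.4]{CGT}, whose proofs run exactly along the lines you describe (the four-term sequence for multiplication by a superficial element in part $(i)$, and additivity of lengths plus Artin--Rees stability of $\{N\cap I^{n+1}M\}_n$ in part $(ii)$). Part $(ii)$ is fine as written, including the sign $(-1)^{d-s}$.

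One point in part $(i)$ you should not gloss over: your identity $P_{M/xM}(n)=P_M(n)-P_M(n-1)+\ell(0:_Mx)$ gives, upon comparing constant terms, $(-1)^{d-1}e_{d-1}(\q;M/xM)=(-1)^{d-1}e_{d-1}(\q;M)+\ell(0:_Mx)$, i.e.\ the correction term lands on the $M/xM$ side, whereas the displayed statement puts it on the $M$ side. Your version is the correct one (and the one in \cite{CGT}); the printed statement has $M$ and $M/xM$ transposed. For instance, with $R=k[[s,t]]$, $M=R\oplus R/(s,t)$, $\q=(s,t)$ and $x=s$, one computes $e_1(\q;M)=0$, $e_1(\q;M/xM)=-1$ and $\ell(0:_Mx)=1$, which satisfies your identity but not the printed one. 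So when you say the constant coefficient ``carries the extra term, which is the remaining equality,'' you are silently endorsing a misstatement: write down the identity you actually derived and flag the discrepancy. (Nothing downstream in the paper is affected, since the lemma is only invoked for $i\le d-2$ or when $0:_Mx=0$.) The only other ingredient to make explicit is the stabilization $\q^{n+1}M:_Mx=\q^nM+(0:_Mx)$ for $n\gg 0$, which you correctly identify as the essential input and which is standard for superficial elements.
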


\section{The finiteness of the set $\calP_{\calD}(M)$}

Recall that 
the definition of a sequentially Cohen-Macaulay module was introduced first by LT. Nhan and the first author (\cite{CN}).
\begin{definition} \label {def1}
A filtration of submodules $ \calF=\{M_i\}_{i=0}^t$ of $M$ is called a {\it generalized Cohen-Macaulay filtration} 
if $\calF \in \calF(M)$ and $M_{i-1}/M_i$ are generalized Cohen-Macaulay modules for all $i=1,\ldots,t-1$. 
A module $M$ is called {\it sequentially generalized Cohen-Macaulay} if it has a generalized Cohen-Macaulay filtration. 
In particular, $\calF(M)$ is the set of all generalized Cohen-Macaulay filtrations of $M$. If $M$ is a sequentially Cohen-Macaulay module, then  $M$ is a sequentially generalized Cohen-Macaulay module.\\
\end{definition}
 Now, the function
$$H^{ad}_{I,M}(n)=\ell(M/I^{n+1}M)-\sum\limits_{i=0}^{d}\adeg_i(I;M)\binom{n+i}{i}$$
is called an {\it adjusted Hilbert-Samuel function} of $M$ with respect to $I$.  
It is well known that $\ell(M/I^{n+1}M)$ becomes a polynomial for large enough $n>0$. So the function $H^{ad}_{I,M}(n)$ become a polynomial $P^{ad}_{\q,M}(n)$. Such polynomial is called {\it adjusted Hilbert-Samuel polynomial} and of the form
$$P^{ad}_{I,M}(n)=\sum\limits_{i=1}^{d}\left((-1)^ie_i(I;M)-\adeg_{d-i}(I;M)\right)\binom{n+d-i}{d-i}.$$
These integers $a_i(I;M)=(-1)^ie_i(I;M)-\adeg_{d-i}(I;M)$ are called \textit{adjusted Hilbert coefficients} of $M$ with respect to $I$ for all $i=1,\ldots,d$.
We denote by $\calP_{\calF}(M)$ the set of all adjusted Hilbert-Samuel polynomials $P^{ad}_{\Q,M}(n)$, 
where $\q$ runs over the set of all distinguished parameter ideals of $M$ with respect to $\calF$. 

Recall that a system $x_1,\ldots,x_m$ in $R$ is said to be d-{\it sequence} on $M$ (see \cite{H}, \cite{T}) if 
$$(x_1,\ldots,x_{i-1})M:x_{i}x_k=(x_1,\ldots,x_{i-1})M:x_k$$
for all $i=1,\ldots,m$ and $k\geq i$. 
The sequence $\underline x$ is said to be dd-{\it sequence} on $M$ (see \cite{CC3}) if $x_1^{n_1},\ldots,x_s^{n_s}$ is a d-sequence on $M$ and  $x^{n_1}_1,\ldots,x^{n_i}_i$ is a d-sequence on $M/(x_{i+1}^{n_{i+1}},\ldots,x_s^{n_s})M$ for all positive integers $n_1,\ldots,n_s$ and all $i=1,\ldots,s-1$. According to D. T. Cuong and the first author, if the parameter ideal $\q$ is generated by a dd-sequence on $M$, the adjusted Hilbert coefficients are described as follows.

\begin{lemma}[{\cite[Theorem 6.2]{CC2}}]\label{f-dd}
Let $M$ be a sequentially generalized Cohen-Macaulay module and $\q=(x_1,\ldots,x_d)$ a system of parameters of $M$. Assume that $x_1,\ldots,x_d$ is a dd-sequence on $M$. Then the adjusted Hilbert coefficients are of the form
$$ a_{d-d_k}(\q;M)=\sum\limits_{j=1}^{d_k} \binom{d_k-1}{j-1}\ell(H^j_{\m}(M/D_k))$$
for $k=0,\ldots,t-1$,
$$ a_{d-i}(\q;M)=\sum\limits_{j=1}^{i} \binom{i-1}{j-1}\ell(H^j_{\m}(M/D_k))$$
for $d_k<i<d_{k-1}$ and $a_d(\q;M)=0$.
\end{lemma}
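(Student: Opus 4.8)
The plan is to induct on $d=\dim M$, peeling off the first element $x:=x_1$ of the dd-sequence. A dd-sequence consists of filter-regular (hence superficial) parameters, and it is standard — and already implicit in Lemma \ref{lmf-a}, which works with $\calF/x\calF\in\calF(M/xM)$ — that passing to $M':=M/xM$ preserves the sequentially generalized Cohen-Macaulay property, while the tail $x_2,\ldots,x_d$ remains a dd-sequence on $M'$. The dimension filtration of $M'$ is $\calD/xM=\{(D_k+xM)/xM\}$ as in the Notation. Thus the inductive hypothesis applies to $(M',\q')$ with $\q'=(x_2,\ldots,x_d)$, and the whole game is to match the formula for $M$ against that for $M'$.

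The engine of the induction is a pair of reduction formulas. For the interior indices $i\ge 2$, Lemma \ref{lms-e}$(i)$ gives $e_{d-i}(\q;M)=e_{d-i}(\q;M')$ (since $d-i\le d-2$), while Lemma \ref{lmf-a} gives $\adeg_i(\q;M)=\adeg_{i-1}(\q;M')$. Combining these with $a_{d-i}(\q;M)=(-1)^{d-i}e_{d-i}(\q;M)-\adeg_i(\q;M)$ yields the clean recursion
$$a_{d-i}(\q;M)=a_{(d-1)-(i-1)}(\q;M'),\qquad i\ge 2.$$
It then remains to transport the right-hand side, which by induction is a binomial sum in the lengths $\ell(H^j_\fkm(M'/D_k'))$, back into the asserted sum in the $\ell(H^j_\fkm(M/D_k))$. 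Writing $N:=M/D_k$, one has $M'/D_k'\cong N/xN$, and — using that the dd-sequence generators annihilate the finite-length modules $H^j_\fkm(N)$ below the top degree (a consequence of $\q$ being distinguished) — the cohomology sequence of $0\to N\xrightarrow{x}N\to N/xN\to0$ collapses to
$$\ell(H^j_\fkm(N/xN))=\ell(H^j_\fkm(N))+\ell(H^{j+1}_\fkm(N))$$
in the relevant range. Substituting this into the inductive binomial sum and applying Pascal's rule $\binom{i-2}{j-1}+\binom{i-2}{j-2}=\binom{i-1}{j-1}$ reproduces exactly $\sum_{j=1}^{i}\binom{i-1}{j-1}\ell(H^j_\fkm(N))$. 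The same bookkeeping handles the jump index $i=d_k$, where the upper summation limit is $d_k$ rather than $i$.

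The boundary indices form the base of the induction and need separate care. The identity $a_d(\q;M)=0$ expresses $(-1)^d e_d(\q;M)=\adeg_0(\q;M)=\ell(H^0_\fkm(M))$; this follows by splitting off $H^0_\fkm(M)$ via $0\to H^0_\fkm(M)\to M\to\overline M\to0$ and applying Lemma \ref{lms-e}$(ii)$ with $N=H^0_\fkm(M)$, together with the vanishing of the bottom coefficient of the positive-depth module $\overline M$. For $i=1$ the recursion stops — Lemma \ref{lmf-a} only yields an inequality at $\adeg$-index $0$ — so one computes $a_{d-1}(\q;M)=(-1)^{d-1}e_{d-1}(\q;M')+\ell(0:_Mx)-\adeg_1(\q;M)$ directly and identifies it, via the already-proven case $a_{d-1}(\q;M')=0$ and the cohomology relation above, with $\ell(H^1_\fkm(M/D_{k_0}))$ for the appropriate $k_0$.

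The main obstacle is precisely the compatibility of the dimension filtration with the hyperplane section at the bottom of the filtration: when $d_{t-1}=1$, the lowest piece $D_{t-1}/xD_{t-1}$ of $M'$ collapses into $H^0_\fkm(M')$, so $\calD/xM$ shortens by one and the indexing of the $D_k$ shifts — this is exactly the dichotomy built into the Notation for $\calF/x\calF$ and into the inequality in Lemma \ref{lmf-a}. Tracking this collapse, and verifying throughout that the low-degree local cohomology modules are of finite length and are annihilated by the dd-sequence generators (so that the additive relation for $\ell(H^j_\fkm(N/xN))$ is an equality, not merely an inequality), is the delicate heart of the argument; everything else is the Pascal-triangle bookkeeping above.
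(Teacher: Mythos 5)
First, a point of comparison: the paper does not prove this lemma at all --- it is imported verbatim from \cite[Theorem 6.2]{CC2}, whose derivation rests on the structure theory of dd-sequences (length functions $\ell(M/(x_1^{n_1},\ldots,x_d^{n_d})M)$ polynomial in $\underline n$, partial Euler--Poincar\'e characteristics of Koszul complexes as in \cite{CC3}), not on a superficial-element induction on the Hilbert coefficients. So your route is necessarily different from the source. Its combinatorial skeleton is sound: the recursion $a_{d-i}(\q;M)=a_{(d-1)-(i-1)}(\q;M/x_1M)$ for $i\ge 2$ does follow from Lemma \ref{lms-e}$(i)$ and Lemma \ref{lmf-a}, the shift $d_k'=d_k-1$ matches the case distinctions correctly, and Pascal's rule recombines the binomials exactly as you describe.

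The genuine gap is the step you yourself call the delicate heart, and the justification you offer for it is wrong. The identity $\ell(H^j_{\fkm}(N/xN))=\ell(H^j_{\fkm}(N))+\ell(H^{j+1}_{\fkm}(N))$ for $N=M/D_k$ requires $x\,H^j_{\fkm}(N)=x\,H^{j+1}_{\fkm}(N)=0$, and you attribute this to ``$\q$ being distinguished.'' Distinguishedness only says $(x_j\mid j>d_i)D_i=0$; it imposes no condition on the positive-degree local cohomology of $M/D_k$. (Any generalized Cohen--Macaulay, non-Buchsbaum module is already a counterexample: its dimension filtration is trivial, so every system of parameters is distinguished, yet most do not annihilate $H^j_{\fkm}(M)$ for $j<d$.) What is actually needed is the standardness-type behavior of dd-sequences on sequentially generalized Cohen--Macaulay modules --- precisely the structural content of \cite{CC2} on which the quoted theorem rests --- so as written the argument is circular at its central step. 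The same applies to the two boundary computations you assert rather than prove: the vanishing $e_d(\q;\overline M)=0$ for the positive-depth quotient $\overline M$ (a nontrivial property of d-sequence parameter ideals, not a formal consequence of positive depth), and the identification of $\ell(H^0_{\fkm}(M/xM))-\adeg_1(\q;M)$ with $\ell(H^1_{\fkm}(M/D_{t-1}))$ in the $i=1$ case, where $H^1_{\fkm}(M)$ need not even have finite length. A smaller slip: ``filter-regular, hence superficial'' is false as a general implication; it is the d-sequence property (via \cite{T}) that makes $x_1$ superficial for $\q$.
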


Now with the above notations, we have the main result in this section.
\begin{theorem}\label{thm1}
Let $M$ be a sequentially generalized Cohen-Macaulay module and $\calF\in \calF(M)$. Then the set $\calP_{\calF}(M)$ of adjusted Hilbert-Samuel polynomials is finite.
\end{theorem}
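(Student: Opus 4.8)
The plan is to induct on $d=\dim M$, reducing the statement to the bounding of each adjusted Hilbert coefficient $a_i(\q;M)=(-1)^ie_i(\q;M)-\adeg_{d-i}(\q;M)$. Since the $a_i(\q;M)$ are integers and they determine $P^{ad}_{\q,M}$, it suffices to show that each of them takes only finitely many values as $\q$ runs over the distinguished parameter ideals of $M$ with respect to $\calF$. The cases $d\le 1$ are handled by direct computation: when $d=1$ the module $C_0=M/D_1$ is Cohen--Macaulay of dimension $1$, so $e_1(\q;C_0)=0$, and Lemma \ref{lms-e}(ii) together with Lemma \ref{lm2} gives $a_1(\q;M)=-\ell(H^0_\m(M))$, a single value.

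For the inductive step, fix a distinguished parameter ideal $\q$. As $k$ is infinite I would choose a distinguished system of parameters $x_1,\dots,x_d$ generating $\q$ whose first element $x=x_1$ is at once superficial for $\q$ and filter-regular on $M$, and such that $\calF/x\calF\in\calF(M/xM)$; a generic choice achieves this. Then $M/xM$ is sequentially generalized Cohen--Macaulay of dimension $d-1$, the image $\bar\q$ of $\q$ is a distinguished parameter ideal of $M/xM$ with respect to $\calF/x\calF$, and the induction hypothesis applies to $M/xM$. Combining Lemma \ref{lms-e}(i) with the arithmetic-degree identities of Lemma \ref{lmf-a}, I would obtain $a_i(\q;M)=a_i(\bar\q;M/xM)$ for $1\le i\le d-2$ and
\[ a_{d-1}(\q;M)=a_{d-1}(\bar\q;M/xM)+\ell(0:_Mx)+\bigl(\adeg_0(\bar\q;M/xM)-\adeg_1(\q;M)\bigr). \]
By induction the terms $a_i(\bar\q;M/xM)$ take finitely many values, while $\ell(0:_Mx)\le\ell(H^0_\m(M))$ is a fixed bound; the remaining nonnegative gap equals $\ell(H^0_\m(M/xM))-e_0(x;D_{t-1})$ by the computation inside Lemma \ref{lmf-a}. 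Thus $a_1(\q;M),\dots,a_{d-1}(\q;M)$ are bounded provided this gap is bounded uniformly in $\q$.

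The coefficient the reduction modulo $x$ cannot see is the top one, $a_d(\q;M)=(-1)^de_d(\q;M)-\ell(H^0_\m(M))$, since $M/xM$ has no $d$-th coefficient; bounding $e_d(\q;M)$ is the heart of the matter. Here I would use the dimension filtration instead of a hyperplane section. From the exact sequence $0\to D_1\to M\to C_0\to 0$, with $C_0$ generalized Cohen--Macaulay of dimension $d$, Lemma \ref{lms-e}(ii) controls the Hilbert coefficients of $M$ coming from $C_0$, and an Artin--Rees comparison of $\q^{n+1}M\cap D_1$ with $\q^{n+1}D_1$ reduces the remaining contribution to invariants of $D_1$, which is sequentially generalized Cohen--Macaulay of dimension $d_1<d$ and hence covered by the induction. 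The essential feature that makes the bookkeeping finite is that the unbounded multiplicities $e_0(\q;D_j)$ appearing in these comparisons are precisely the arithmetic degrees $\adeg_{d_j}(\q;M)$ (Lemma \ref{lm2}), so they cancel in the adjusted coefficients $a_i$, leaving only quantities governed by the generalized Cohen--Macaulay quotients $C_0,\dots,C_{t-1}$ and by the dd-sequence values recorded in Lemma \ref{f-dd}.

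The main obstacle is the uniform, $\q$-independent control of the two correction terms produced above: the length $\ell(H^0_\m(M/xM))$ of the top local cohomology of the hyperplane section, and the Artin--Rees defect $\ell\bigl((\q^{n+1}M\cap D_1)/\q^{n+1}D_1\bigr)$, whose naive bound involves an Artin--Rees number depending on $\q$. I expect to resolve both by exploiting that $M$ is sequentially generalized Cohen--Macaulay: each quotient $C_i=D_i/D_{i+1}$ is generalized Cohen--Macaulay, so all the lengths $\ell(H^j_\m(C_i))$ with $j<d_i$ are finite and fixed, and the filter-regularity of $x$ confines the discrepancies to these finitely many invariants. Granting this uniform boundedness, every $a_i(\q;M)$ lies in a fixed finite set of integers, so only finitely many polynomials $P^{ad}_{\q,M}$ occur and $\calP_\calF(M)$ is finite.
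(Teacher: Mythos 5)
Your overall strategy --- reduce to bounding each adjusted coefficient $a_i(\q;M)$ and induct on $d$ via a superficial, filter-regular $x=x_1$ --- is the same skeleton the paper uses, but the induction as you have set it up does not close. The inductive hypothesis is the finiteness of $\calP_{\calF'}(N)$ for a \emph{fixed} module $N$ and a \emph{fixed} filtration $\calF'$; when $\q$ varies over the distinguished parameter ideals of $M$, both the quotient $M/xM$ and the filtration $\calF/x\calF$ vary with $\q$, so the assertion that ``by induction the terms $a_i(\bar\q;M/xM)$ take finitely many values'' is not an application of the inductive hypothesis --- it would require a bound uniform over an infinite family of $(d-1)$-dimensional modules. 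This is precisely why the paper proves the quantitative statement of Theorem \ref{claim39}: explicit bounds on $\mid a_i(\q;M)\mid$ in terms of the invariants $I(\calF,M)$ and $C_{\calF}$, together with the monotonicity $I(\calF/x\calF,M/xM)\le I(\calF,M)$ and $C_{\calF/x\calF}\le C_{\calF}$ (Fact \ref{rm_inv}), which is what makes the induction on $d$ legitimate. A purely qualitative finiteness statement cannot be propagated through a varying quotient.

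The second gap is that the two ``correction terms'' you isolate --- the uniform control of $\ell(H^0_{\fkm}(M/xM))$ and the bound on $e_d(\q;M)$ --- are the actual content of the theorem, and your proposal only records the expectation that they can be bounded. In the paper these are handled by two imported uniform results (Fact \ref{lm11}): a bound $\reg(G_{\q}(M))\le C_{\calF}$ valid for \emph{all} distinguished parameter ideals $\q$ with respect to $\calF$, and the nonnegativity $H^{ad}_{\q,M}(n)\ge 0$ for $n$ past an explicit threshold. Sandwiching $P^{ad}_{\q,M}(n)=H^{ad}_{\q,M}(n)$ between $0$ and the explicit upper bound of Lemma \ref{lm31} is what bounds $a_1$ (the paper obtains $0\le -e_1(\q;M)-\adeg_{d-1}(\q;M)\le I(M/M_1)$ directly from the leading term, with no hyperplane section) and, once the middle coefficients are controlled by the quantitative induction, what bounds $e_d$ by evaluating the polynomial identity at one explicitly chosen $n$. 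Your Artin--Rees route to $e_d$ founders on the very point you mention --- the Artin--Rees number depends on $\q$ --- and nothing in the proposal replaces it. So the plan correctly locates the difficulties but does not contain the ideas (uniform regularity bound, nonnegativity of the adjusted Hilbert--Samuel function, and a quantitative rather than qualitative induction) that overcome them.
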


\begin{Setting}
In this section, from now on, we assume that
$M$ is a sequentially generalized Cohen-Macaulay module. 
Set $W=H^0_\fkm(M)$, $\overline M= M/W$, and $\overline N=(N+W)/W$ for all submodules $N$ of $M$. Let $\calF\in\calF(M)$, $\q:=(x:=x_1,x_2,\ldots,x_d)$ be a
distinguished parameter ideal of $M$ with respect to $\calF$.\end{Setting}
Set
 $$I(M)=\sup\{\ell(M/\q M)-e_0(\q ;M)\mid \q \text { is a parameter ideal of } M \}.$$
and $I(\calF,M)=\displaystyle\sum \limits^{t-1}_{i=0}I(M_i/M_{i+1})+\ell(M_t)$. 

\begin{fact} \label{rm_inv} With this notation, we have
\begin{enumerate}[$i)$] 
\item The filtration of submodules
 $\overline{\calF}:=\{ \overline{M_i}\}_{i=0}^t$ of $\overline M$ is a generalized Cohen-Macaulay  filtration of $\overline M$ and $$I({\calF},M) = I(\overline{\calF},\overline M) +\ell(W)$$
(\cite[Lemma 6]{CLT}).
\item The module $M/xM$ is  sequentially generalized Cohen-Macaulay and $\calF/x\calF\in\calF(M/xM)$ (\cite[Corollary 2]{CLT}).
Moreover, we have
$$I(\calF/x\calF,M/xM)\leq I(\calF,M)$$
(\cite[Lemma 4]{CLT}).
 \end{enumerate}

\end{fact}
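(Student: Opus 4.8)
The plan is to prove that the set $\calP_{\calF}(M)$ is finite by showing that each adjusted Hilbert coefficient $a_i(\q;M)=(-1)^ie_i(\q;M)-\adeg_{d-i}(\q;M)$, $1\le i\le d$, takes only finitely many values as $\q$ ranges over the distinguished parameter ideals of $M$ with respect to $\calF$; since these coefficients are integers, a uniform bound on their absolute values suffices, and the collection of polynomials $P^{ad}_{\q,M}(n)=\sum_{i=1}^d a_i(\q;M)\binom{n+d-i}{d-i}$ is then finite. I would argue by induction on $d=\dim M$.

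First, a preliminary reduction to the case $H^0_\fkm(M)=0$. Writing $W=H^0_\fkm(M)$ and $\overline M=M/W$, Lemma \ref{lms-e}(ii) applied with $N=W$ (of dimension $0<d$), together with the equalities $\adeg_j(\q;M)=\adeg_j(\q;\overline M)$ for $j\ge 1$ and $\adeg_0(\q;M)=\ell(W)$, shows that $a_i(\q;M)=a_i(\q;\overline M)$ for every $i$. By Fact \ref{rm_inv}(i) the filtration $\overline\calF$ is a generalized Cohen-Macaulay filtration of $\overline M$, while a distinguished parameter ideal of $M$ with respect to $\calF$ is readily seen to be distinguished for $\overline M$ with respect to $\overline\calF$. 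Hence $\calP_{\calF}(M)=\calP_{\overline\calF}(\overline M)$, and we may assume from now on that $H^0_\fkm(M)=0$, i.e. $\depth M>0$.

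For the base case $d=1$ the condition $\depth M=1$ forces $M$ to be Cohen-Macaulay, so $a_1(\q;M)=0$ for every $\q$ and $\calP_\calF(M)=\{0\}$. For $d\ge 2$, fix a distinguished system of parameters $x_1,\dots,x_d$ generating $\q$, chosen so that $x:=x_1$ is moreover superficial for $\q$ (possible since the residue field is infinite); being filter-regular with $H^0_\fkm(M)=0$, it is in fact $M$-regular, so $0:_Mx=0$. Then $x_2,\dots,x_d$ is a distinguished system for $M/xM$ with respect to $\calF/x\calF$, which by Fact \ref{rm_inv}(ii) is again a generalized Cohen-Macaulay filtration with $I(\calF/x\calF,M/xM)\le I(\calF,M)$. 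Combining Lemma \ref{lms-e}(i) (which, as $\ell(0:_Mx)=0$, gives $e_i(\q;M)=e_i(\q;M/xM)$ for $0\le i\le d-1$) with the arithmetic-degree shifts of Lemma \ref{lmf-a} yields $a_i(\q;M)=a_i(\q;M/xM)$ for $1\le i\le d-2$ and $a_{d-1}(\q;M)=a_{d-1}(\q;M/xM)+\bigl(\adeg_0(\q;M/xM)-\adeg_1(\q;M)\bigr)$, where the parenthesised term is non-negative by Lemma \ref{lmf-a} and bounded above by $\ell(H^0_\fkm(M/xM))$, itself controlled by the finite invariant $I(\calF,M)$. By the induction hypothesis $\calP_{\calF/x\calF}(M/xM)$ is finite, so each $a_j(\cdot;M/xM)$ assumes finitely many values; therefore $a_1(\q;M),\dots,a_{d-1}(\q;M)$ all lie in finite sets.

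The remaining and, I expect, hardest point is to bound the top coefficient $a_d(\q;M)=(-1)^de_d(\q;M)$ (recall $\adeg_0(\q;M)=0$ now), which the reduction modulo $x$ does not see, since Lemma \ref{lms-e}(i) controls $e_i$ only up to $i=d-1$. Here I would exploit the identity $a_d(\q;M)=P^{ad}_{\q,M}(-1)$, obtained by evaluating the binomial basis at $n=-1$, so that $a_d(\q;M)$ measures the total deviation of the adjusted Hilbert--Samuel function from its polynomial. For $\q$ generated by a dd-sequence this deviation vanishes, as Lemma \ref{f-dd} gives $a_d(\q;M)=0$; for a general distinguished parameter ideal I would estimate the deviation against the generalized Cohen-Macaulay quotient $C_0=M/D_1$ of dimension $d$, whose Hilbert coefficients are uniformly bounded, using the finiteness of $I(\calF,M)$ to absorb the contributions of the lower-dimensional members of the filtration. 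This is precisely where the sequentially generalized Cohen-Macaulay hypothesis is essential, through the finiteness of $I(\calF,M)$ supplied by the Setting and Fact \ref{rm_inv}; once $a_d(\q;M)$ is bounded, all coefficients lie in finite sets and $\calP_\calF(M)$ is finite.
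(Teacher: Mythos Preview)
Your proposal does not address the stated result at all. Fact~\ref{rm_inv} asserts two structural properties of the filtration under passage to $\overline M=M/W$ and to $M/xM$, together with the numerical relations $I(\calF,M)=I(\overline\calF,\overline M)+\ell(W)$ and $I(\calF/x\calF,M/xM)\le I(\calF,M)$. The paper gives no proof of this Fact; it is simply quoted from~\cite{CLT}. What you have written is instead a proof sketch for Theorem~\ref{thm1} (finiteness of $\calP_\calF(M)$), and you even \emph{invoke} Fact~\ref{rm_inv} as an input. So there is nothing to compare: your argument targets a different statement.

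If one reads your text as an attempt at Theorem~\ref{thm1}, there is still a genuine gap in the induction. You say ``by the induction hypothesis $\calP_{\calF/x\calF}(M/xM)$ is finite, so each $a_j(\cdot;M/xM)$ assumes finitely many values''. But $x=x_1$ and hence the filtration $\calF/x\calF$ depend on the chosen distinguished parameter ideal $\q$; as $\q$ varies, so does the module $M/xM$ and its filtration. Finiteness of $\calP_{\calF/x\calF}(M/xM)$ for each individual $x$ does not yield a bound uniform over all $\q$. The paper circumvents this by proving, in Theorem~\ref{claim39}, explicit bounds for $|a_i(\q;M)|$ in terms of the single invariant $I(\calF,M)$ (and $d$); since $I(\calF/x\calF,M/xM)\le I(\calF,M)$ by Fact~\ref{rm_inv}(ii), these bounds pass uniformly through the reduction modulo $x$. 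Your treatment of $a_d$ is also incomplete: the identity $a_d(\q;M)=P^{ad}_{\q,M}(-1)$ is a statement about the polynomial only, and the paper's actual bound on $e_d$ uses the regularity estimate of Fact~\ref{lm11}(i), the non-negativity of Fact~\ref{fucnega}(ii), and the upper bound of Lemma~\ref{lm31} at a specific large value of $n$ to trap $(-1)^de_d(\q;M)$.
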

Now, let $\calS=\bigoplus \limits_{n\geq  0}S_n $ be a standard Noetherian graded ring and
$E=\bigoplus \limits_{n \in \Bbb Z}E_n $ a finitely generated graded $\calS$-module. The {\it Castelnouvo-Mumford regularity} reg$(E)$ of $E$ is defined by
$$\reg(E)=\sup\{n+i\mid [H^i_{S_+}(E)]_n\ne0, i\geq 0 \}$$
and simply called {\it regularity}, where $S_+=\bigoplus \limits_{n> 0}S_n$. Let $N$ be a finitely generated $R$-module and $Q$ a parameter ideal of $N$. We always denote the associate graded module of $N$ with respect to $Q$ by $G_{Q}(N)$, i.e $G_{Q}(N)=\bigoplus \limits_{n\geq 0}{Q^nN/Q^{n+1}N}$.
With this notation, we have 

\begin{fact}\label{lm11}\label{fucnega}
\begin{enumerate}[$i)$] 
\item There is a constant $C=C_{\calF}$ such that
$$\reg(G_{\Q}(M))\leq C=(3I(\calF,M))^{d!}-2I(\calF,M),$$
 for all distinguished parameter ideals $\Q$ of $M$ with respect to $\calF$ ({\cite[Theorem 4]{CLT}}). 
\item We have $H^{ad}_{\Q,M}(n)\geq 0$ for all $$n\geq \reg(G_{\q}(M))+\binom {\reg(G_{\q}(M)) +d-1} {d-1} I(\calF, M)+d,$$
({\cite[Theorem 4.4]{Lo}}).
\end{enumerate}

\end{fact}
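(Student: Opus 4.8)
The plan is to prove the two assertions in turn, establishing the uniform regularity bound in (i) by induction on $d=\dim M$ and then feeding that bound into the nonnegativity statement in (ii). For (i) I would first reduce to the case $W=H^0_\fkm(M)=0$: by Fact \ref{rm_inv}(i) the passage to $\overline M=M/W$ replaces $\calF$ by a generalized Cohen-Macaulay filtration $\overline{\calF}$ with $I(\calF,M)=I(\overline{\calF},\overline M)+\ell(W)$, and since $W$ has finite length the modules $G_{\Q}(M)$ and $G_{\Q}(\overline M)$ differ only by a subquotient supported at $\fkm$ and concentrated in low degrees. A standard comparison of regularities along the associated short exact sequences then shows that $\reg(G_{\Q}(M))$ and $\reg(G_{\Q}(\overline M))$ differ by a quantity bounded by $\ell(W)\le I(\calF,M)$. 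The base case $d=0$ is immediate, since then $M$ has finite length and $\reg(G_{\Q}(M))\le\ell(M)\le I(\calF,M)$.

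For the inductive step I would take $x=x_1$, the first element of the distinguished system of parameters generating $\Q$; it is filter-regular, and by Fact \ref{rm_inv}(ii) the module $M/xM$ is again sequentially generalized Cohen-Macaulay with $\calF/x\calF\in\calF(M/xM)$ and $I(\calF/x\calF,M/xM)\le I(\calF,M)$, so by induction $\reg(G_{\overline{\Q}}(M/xM))\le C_{d-1}$, where $C_{d-1}$ is the constant attached to $I(\calF,M)$ in dimension $d-1$. To propagate the bound I would exploit multiplication by the initial form $x^{*}\in[G_{\Q}(M)]_1$, using the four-term sequence built from the defect $(0:_{G_{\Q}(M)}x^{*})$ and the Valabrega--Valla discrepancy between $G_{\Q}(M)/x^{*}G_{\Q}(M)$ and $G_{\overline{\Q}}(M/xM)$. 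For a generalized Cohen-Macaulay module both defects have finite length, bounded in terms of $I(\calF,M)$ and the number of graded components up to degree $C_{d-1}$, which grows polynomially in $C_{d-1}$; combining these length bounds with the regularity estimate $\reg(A)\le\max\{\reg(B),\reg(C)+1\}$ for a short exact sequence $0\to A\to B\to C\to0$ yields a recursion of the shape $C_d+2I(\calF,M)\le(C_{d-1}+2I(\calF,M))^{d}$. Solving this recursion from the base value gives precisely the closed form $C=(3I(\calF,M))^{d!}-2I(\calF,M)$. The principal obstacle here is to control the two defect modules \emph{uniformly} over all distinguished parameter ideals at once, bounding their lengths across every graded piece $C_i=D_i/D_{i+1}$ of the dimension filtration and then tracking how these lengths compound through the induction to produce exactly the factorial exponent.

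For (ii), with the regularity bound secured, I would show that beyond the stated threshold $H^{ad}_{\Q,M}(n)$ is a sum of manifestly nonnegative terms. First I would decompose $\ell(M/\Q^{n+1}M)$ along the dimension filtration using the exact sequences $0\to D_{i+1}\to D_i\to C_i\to0$ together with Lemma \ref{lms-e}(ii). By Lemma \ref{lm2} the arithmetic degree $\adeg_{d_i}(\Q;M)$ equals the multiplicity $e_0(\Q;D_i)$, so the sum $\sum_{i}\adeg_i(\Q;M)\binom{n+i}{i}$ matches the leading Hilbert polynomial of the pieces term by term. Each $C_i$ is generalized Cohen-Macaulay of dimension $d_i$, and for such a module the gap between its length function and its multiplicity term is, once $n$ exceeds the regularity of its associated graded module, governed by the nonnegative local-cohomology lengths $\sum_{j}\binom{d_i-1}{j-1}\ell(H^j_{\fkm}(C_i))$ that already appear in Lemma \ref{f-dd}. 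Summing these contributions over the filtration gives $H^{ad}_{\Q,M}(n)\ge0$.

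The role of the threshold $n\ge\reg(G_{\Q}(M))+\binom{\reg(G_{\Q}(M))+d-1}{d-1}I(\calF,M)+d$ is to guarantee that the Hilbert function of \emph{every} subquotient $C_i$ has already reached its Hilbert polynomial and that the accumulated discrepancy between Hilbert function and polynomial across the whole filtration, bounded crudely by $\binom{\reg(G_{\Q}(M))+d-1}{d-1}I(\calF,M)$, has been surpassed. The main difficulty in (ii) is therefore that the single regularity bound for $G_{\Q}(M)$ must be shown to dominate the regularities of all the pieces $C_i$ simultaneously, so that their nonnegative contributions all switch on at once; making this control uniform, depending on $\Q$ only through the regularity bound of (i), is the technical heart of the argument.
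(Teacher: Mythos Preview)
The paper does not prove this statement at all: it is recorded as a \emph{Fact} and each part is simply attributed to an external reference (part (i) to \cite[Theorem~4]{CLT}, part (ii) to \cite[Theorem~4.4]{Lo}). There is therefore no proof in the paper to compare your proposal against; within this paper the statement functions as a black box that is quoted and then applied in Lemma~\ref{lm31} and Theorem~\ref{claim39}.

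That said, your outline for (i) --- reduce modulo $W=H^0_\fkm(M)$, induct on $d$ by passing to $M/xM$ via Fact~\ref{rm_inv}(ii), compare $G_{\Q}(M)/x^{*}G_{\Q}(M)$ with $G_{\overline\Q}(M/xM)$ through the Valabrega--Valla defect, and solve a recursion of the shape $C_d+2I\le(C_{d-1}+2I)^d$ --- is indeed the strategy used in \cite{CLT}, which in turn adapts the Linh--Trung argument \cite{LT} from the generalized Cohen-Macaulay case to the sequentially generalized Cohen-Macaulay setting. Your sketch for (ii), decomposing $H^{ad}_{\Q,M}$ along the dimension filtration and using that each $C_i$ is generalized Cohen-Macaulay so that its Hilbert-function discrepancy is eventually nonnegative, is likewise in the spirit of \cite{Lo}. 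So your plan is a reasonable reconstruction of the cited proofs, but for the purposes of \emph{this} paper no argument is expected: one simply invokes the references.
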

The following result give an upper bound for the adjusted Hilbert-Samuel function.
\begin{lemma}\label{lm31}  We have
$$H^{ad}_{\Q,M}(n)\leq \sum \limits^{t-1}_{i=0}\binom{n + d _i-1} {d_i -1}I(M_i/M_{i+1})+\ell(M_t)-\ell(W)),$$
for all $n\geq 0$,
\end{lemma}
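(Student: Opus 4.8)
The plan is to decompose $\ell(M/\q^{n+1}M)$ along the filtration $\calF$ and to control the multiplicity terms through Lemma \ref{lm2}, thereby reducing everything to a uniform Hilbert--Samuel estimate for the generalized Cohen--Macaulay factors $N_i:=M_i/M_{i+1}$. First I would rewrite the adjusted function. By Lemma \ref{lm2} we have $\adeg_0(\q;M)=\ell(W)$ and $\adeg_{d_i}(\q;M)=e_0(\q;M_i)=e_0(\q;N_i)$ for $0\le i\le t-1$ (the latter equality because $\dim M_{i+1}<\dim M_i$), while all remaining $\adeg_j(\q;M)$ vanish; here $d_i\ge 1$ for $i\le t-1$ since $D_t=W$ is the finite-length term of the dimension filtration. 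Hence
\begin{align*}
H^{ad}_{\q,M}(n)=\ell(M/\q^{n+1}M)-\ell(W)-\sum_{i=0}^{t-1}e_0(\q;N_i)\binom{n+d_i}{d_i}.
\end{align*}

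Next I would use subadditivity of Hilbert--Samuel lengths. For any short exact sequence $0\to A\to B\to C\to 0$ one has $\ell(B/\q^{n+1}B)=\ell(A/(A\cap\q^{n+1}B))+\ell(C/\q^{n+1}C)\le \ell(A/\q^{n+1}A)+\ell(C/\q^{n+1}C)$, since $\q^{n+1}A\subseteq A\cap\q^{n+1}B$. Iterating along $M=M_0\supset\cdots\supset M_t$ and using that $M_t$ has finite length (so $\ell(M_t/\q^{n+1}M_t)\le\ell(M_t)$) yields $\ell(M/\q^{n+1}M)\le \sum_{i=0}^{t-1}\ell(N_i/\q^{n+1}N_i)+\ell(M_t)$. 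Substituting this into the display above reduces the lemma to the per-factor estimate
\begin{align*}
\ell(N_i/\q^{n+1}N_i)-e_0(\q;N_i)\binom{n+d_i}{d_i}\le I(N_i)\binom{n+d_i-1}{d_i-1}\qquad(0\le i\le t-1).
\end{align*}

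The heart of the proof, and the step I expect to be the main obstacle, is this estimate, since it must hold \emph{uniformly} over all parameter ideals $\q$, which is precisely the role of the global invariant $I(N_i)$. I would prove, for an arbitrary generalized Cohen--Macaulay module $N$ of dimension $s\ge 1$ and any $\fkm$-primary ideal $\q$, that $\ell(N/\q^{n+1}N)\le e_0(\q;N)\binom{n+s}{s}+I(N)\binom{n+s-1}{s-1}$, by induction on $s$. Factoring out $H^0_\fkm(N)$ reduces to $\depth N\ge 1$: the cost $\ell(H^0_\fkm(N))$ is absorbed by $\binom{n+s-1}{s-1}\ge 1$ together with the identity $I(N)=\ell(H^0_\fkm(N))+I(N/H^0_\fkm(N))$, valid for generalized Cohen--Macaulay modules via $I(N)=\sum_{i=0}^{s-1}\binom{s-1}{i}\ell(H^i_\fkm(N))$. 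If $\depth N\ge 1$ and $s=1$, then $N$ is Cohen--Macaulay and $\ell(N/\q^{n+1}N)=e_0(\q;N)(n+1)$, which is the base case. If $s\ge 2$, I would choose $x\in\q$ superficial for $\q$ and a nonzerodivisor on $N$ (available since the residue field is infinite and $\depth N\ge 1$); then $N/xN$ is generalized Cohen--Macaulay of dimension $s-1$ with $e_0(\q;N/xN)=e_0(\q;N)$ and $I(N/xN)\le I(N)$. From $\q^mN\subseteq\q^{m+1}N:x$ one gets $\ell(N/\q^{m+1}N)-\ell(N/\q^mN)\le \ell\big((N/xN)/\q^{m+1}(N/xN)\big)$; summing over $0\le m\le n$, inserting the inductive bound for $N/xN$, and applying the hockey-stick identities $\sum_{j=0}^{n}\binom{j+s-1}{s-1}=\binom{n+s}{s}$ and $\sum_{j=0}^{n}\binom{j+s-2}{s-2}=\binom{n+s-1}{s-1}$ gives the claim.

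Finally I would combine the subadditivity bound with the per-factor estimate to obtain $H^{ad}_{\q,M}(n)\le \sum_{i=0}^{t-1}I(N_i)\binom{n+d_i-1}{d_i-1}+\ell(M_t)-\ell(W)$, which is the assertion of the lemma. Beyond routine bookkeeping, the only inputs needing justification are the existence of a superficial nonzerodivisor and the monotonicity $I(N/xN)\le I(N)$ for such an $x$, the latter being a standard property of this invariant (compare the filtration version in Fact \ref{rm_inv}(ii)).
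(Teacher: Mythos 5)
Your argument is correct and follows essentially the same route as the paper: both decompose $\ell(M/\q^{n+1}M)$ along the filtration $\calF$ (the paper peels off $M_1$ via a four-term exact sequence and inducts on the length $t$, which is exactly your telescoped subadditivity) and then control each generalized Cohen--Macaulay factor by the uniform bound $\ell(N/\q^{n+1}N)\le e_0(\q;N)\binom{n+s}{s}+I(N)\binom{n+s-1}{s-1}$. The only difference is that the paper simply cites this per-factor estimate from Lemma 1.1 of \cite{LT}, whereas you re-derive it by induction on dimension using superficial elements; your derivation is a correct, self-contained substitute for that citation.
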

\begin{proof} 
Note that from the following exact sequence
$$0\rightarrow (\Q^{n+1}M\cap M_1)/{\Q^{n+1}M_1 \longrightarrow M_1/\Q^{n+1}M_1
\longrightarrow M/\Q^{n+1}M \longrightarrow }
M/\Q^{n+1}M+M_1\rightarrow 0,$$
we get $\ell (M/\Q^{n+1}M)\leq \ell(M/\Q^{n+1}M+M_1)+\ell(M_1/\Q^{n+1}M_1) $. 

Now we argue by the induction on the length $t$ of the dimension filtration of $M$.
The case $t=1,$   it follows from  $\q M_1=0$ and Lemma 1.1 in \cite{LT} that we have
$$\begin{aligned} \ell(M/\Q^{n+1}M)&\leq \ell(M/\Q^{n+1}M+M_1)+\ell(M_1)\\
&\leq \binom{n + d }{d} e_0(\Q;M) +\binom{n + d -1}{d -1}I(M/M_1)+\ell(M_1).
\end{aligned}$$
for all $n\geq 0$.

Now, assume that $t>1$ and that our assertion holds true for $t - 1$. By the inductive hypothesis, we have 
 $$\begin{aligned}\ell (M/\Q^{n+1}M)&\leq \ell(M/\Q^{n+1}M+M_1)+\ell(M_1/\Q^{n+1}M_1) \\
&\leq \binom{n + d }{d}e_0(\Q;M_0)+\binom{n + d -1}{d-1}I(M/M_1)+\ell(M_1/\Q^{n+1}M_1)\\
&\leq \sum \limits^{t-1}_{i=0}\binom{n + d _i}{d_i}e_0(\q;M_i) +\sum \limits^{t-1}_{i=0}\binom{n + d _i-1} {d_i -1}I({M}_i/M_{i+1})+\ell(M_t),\end{aligned}$$
for all $n\geq 0$.
By Lemma \ref{lm2}, we have $$H^{ad}_{\Q,M}(n)\leq \displaystyle \sum \limits^{t-1}_{i=0}\binom{n + d _i-1} {d_i -1}I({M}_i/M_{i+1})+\ell(M_t)-\ell(W),$$ for all $n\geq 0$, as requested. 
\end{proof}

\begin{Theorem}\label{claim39}
     Let  $C=C_{\calF}=(3I(\calF,M))^{d!}-2I(\calF,M)$ as in Fact \ref{lm11} i). Then we have
\begin{enumerate}[$i)$]
\item $\mid e_1(\Q;M)+\adeg_{d-1}(\Q;M)\mid \leq I(M/M_1).$
\item $\mid (-1)^ie_i(\Q;M)-\adeg_{d-i}(\Q;M)\mid \leq 2^{i-1}\left((C+1)^{d-1}I(\calF,M)+d+C+2\right)^{i-1}I(\calF,M)$ for all $i=2,\ldots,d-1.$
\item $\mid e_d(\Q;M)\mid \leq 2^{d-1}\left((C+1)^{d-1}I(\calF,M)+d+C+2\right)^{d-1}I(\calF,M).$
\end{enumerate}
\end{Theorem}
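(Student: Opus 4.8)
The plan is to read off everything from the adjusted Hilbert--Samuel polynomial $P^{ad}_{\Q,M}(n)=\sum_{i=1}^{d}a_i(\Q;M)\binom{n+d-i}{d-i}$, whose coefficients $a_i(\Q;M)=(-1)^ie_i(\Q;M)-\adeg_{d-i}(\Q;M)$ are exactly the quantities to be estimated, and to pin these coefficients down from two-sided control on $H^{ad}_{\Q,M}$. First I would fix the first element $x=x_1$ of a distinguished system generating $\q$, taken superficial for $\q$ and filter-regular on $M$, and record the range on which polynomial and function agree: by Fact~\ref{lm11}(i) one has $\reg(G_{\q}(M))\le C$, so $H^{ad}_{\Q,M}(n)=P^{ad}_{\Q,M}(n)$ for $n$ past the regularity. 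Combining this with the nonnegativity in Fact~\ref{lm11}(ii) and the upper estimate in Lemma~\ref{lm31}, there is an explicit integer $N_0\le (C+1)^{d-1}I(\calF,M)+C+d$ with
$$0\le P^{ad}_{\Q,M}(n)\le U(n):=\sum_{i=0}^{t-1}\binom{n+d_i-1}{d_i-1}I(M_i/M_{i+1})+\ell(M_t)-\ell(W)\le \binom{n+d-1}{d-1}I(\calF,M)\qquad(n\ge N_0),$$
where I used $\binom{n+d_i-1}{d_i-1}\le\binom{n+d-1}{d-1}$ and $\sum_i I(M_i/M_{i+1})+\ell(M_t)=I(\calF,M)$. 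Write $D=(C+1)^{d-1}I(\calF,M)+C+d+2$, so that $N_0<D$ and $U(N_0)\le D^{d-1}I(\calF,M)$.

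For part (i) I would simply compare leading coefficients. Both $P^{ad}_{\Q,M}$ and $U$ have degree $\le d-1$, and the coefficient of $\binom{n+d-1}{d-1}$ in them is $a_1(\Q;M)$ and $I(M/M_1)$ respectively. Nonnegativity of $P^{ad}_{\Q,M}$ for $n\gg0$ forces $a_1(\Q;M)\ge0$, while $P^{ad}_{\Q,M}\le U$ forces $a_1(\Q;M)\le I(M/M_1)$. Since $a_1(\Q;M)=-(e_1(\Q;M)+\adeg_{d-1}(\Q;M))$, this is precisely $\mid e_1(\Q;M)+\adeg_{d-1}(\Q;M)\mid\le I(M/M_1)$.

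For parts (ii) and (iii) I would induct on $d=\dim M$, the point being to estimate each coefficient as the \emph{top} coefficient of a module of the right dimension, using two mechanisms. First, \emph{dimension reduction}: by Lemma~\ref{lms-e}(i) and Lemma~\ref{lmf-a}, passing from $M$ to $M/xM$ preserves $a_j(\Q;M)=a_j(\Q;M/xM)$ for all $j\le d-2$, while the second-highest coefficient changes only by the explicitly bounded nonnegative amount $\ell(0:_Mx)+\bigl(\adeg_0(\q;M/xM)-\adeg_1(\q;M)\bigr)$; moreover $M/xM$ is again sequentially generalized Cohen--Macaulay with $I(\calF/x\calF,M/xM)\le I(\calF,M)$ by Fact~\ref{rm_inv}, so the inductive constants do not grow. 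Second, \emph{top-coefficient extraction}: evaluating the identity $a_d(\Q;M)=P^{ad}_{\Q,M}(N_0)-\sum_{l=1}^{d-1}a_l(\Q;M)\binom{N_0+d-l}{d-l}$ and feeding in the bounds $\mid a_l\mid\le 2^{l-1}D^{l-1}I(\calF,M)$ for $l<d$ together with $\binom{N_0+d-l}{d-l}\le D^{d-l}$ gives $\mid a_d(\Q;M)\mid\le\bigl(1+\sum_{l=1}^{d-1}2^{l-1}\bigr)D^{d-1}I(\calF,M)=2^{d-1}D^{d-1}I(\calF,M)$. In the inductive step I would then bound $a_i(\Q;M)$ for $i\le d-2$ by reduction and the dimension-$(d-1)$ hypothesis; bound $a_{d-1}(\Q;M)$ by reducing once, so that it becomes the top coefficient of $M/xM$, applying the dimension-$(d-1)$ case of (iii) and adding the bounded correction above; and bound $a_d(\Q;M)$ directly by extraction. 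Finally, since $\adeg_0(\Q;M)=\ell(W)$ by Lemma~\ref{lm2}, the estimate on $a_d$ yields the stated bound on $\mid e_d(\Q;M)\mid$ after absorbing $\ell(W)\le I(\calF,M)$.

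The main obstacle will be the constant bookkeeping rather than any single conceptual step. One must check that the dimension-$(d-1)$ constant $D'=(C'+1)^{d-2}I'+C'+(d-1)+2$ attached to $M/xM$ satisfies $D'\le D$ and $I'\le I(\calF,M)$, so that the inductive bounds transfer without deterioration, and that each correction term produced in reducing the second-highest coefficient is dominated by the target $2^{i-1}D^{i-1}I(\calF,M)$; this uses the slack built into $D$ and the fact that $\ell(0:_Mx)$ and the $\adeg$-difference are lengths of finite-length modules controlled by $I(\calF,M)$. The clean factor $2^{i-1}$ is the signature of the recursion $1+\sum_{l=1}^{i-1}2^{l-1}=2^{i-1}$ coming from the top-coefficient identity, while the exponent $D^{i-1}$ reflects that after reduction the relevant polynomial has degree $i-1$; obtaining both at once is exactly why each coefficient must be brought to the top of a module of dimension $i$ before it is estimated.
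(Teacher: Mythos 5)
Your proposal is correct and follows essentially the same strategy as the paper: part $i)$ by comparing the leading coefficient of $P^{ad}_{\Q,M}$ against $0$ (Fact \ref{fucnega} $ii)$) and against the upper bound of Lemma \ref{lm31}; parts $ii)$ and $iii)$ by induction on $d$, reducing modulo the superficial element $x=x_1$ (after killing $W=H^0_{\fkm}(M)$) and extracting the constant term of $P^{ad}_{\Q,M}$ at an explicit $n$ of size roughly $(C+1)^{d-1}I(\calF,M)+C+d$, which is exactly where the factors $2^{i-1}$ and $D^{i-1}$ come from in the paper as well. The one place where you genuinely diverge is the coefficient $a_{d-1}(\Q;M)=(-1)^{d-1}e_{d-1}(\Q;M)-\adeg_1(\Q;M)$: since Lemma \ref{lmf-a} gives only the inequality $\adeg_0(\Q;M/xM)\ge\adeg_1(\Q;M)$ rather than an equality, the paper splits into the cases $d_{t-1}>1$ (where $\adeg_1(\q;M)=0$) and $d_{t-1}=1$, and in the latter passes to the auxiliary module $\widetilde M=M/D_{t-1}$, where $a_{d-1}$ becomes exactly the top Hilbert coefficient $(-1)^{d-1}e_{d-1}(\q;\widetilde M)$, at the cost of checking $I(\widetilde\calF,\widetilde M)\le I(\calF,M)$ and $\widetilde C\le C$. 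You instead stay with $M/xM$ and absorb the nonnegative discrepancy $\adeg_0(\Q;M/xM)-\adeg_1(\Q;M)$ as a bounded correction; this works, but the needed bound is not automatic and should be justified, e.g.\ by $\adeg_0(\Q;M/xM)=\ell(H^0_{\fkm}(M/xM))\le I(\calF/x\calF,M/xM)\le I(\calF,M)$, which follows from Fact \ref{rm_inv} $i)$ applied to $M/xM$ together with Fact \ref{rm_inv} $ii)$. Your variant buys a cleaner induction with no case analysis and no auxiliary filtration on $\widetilde M$; the paper's variant buys an exact identity in place of an inequality, so no extra slack needs to be tracked in the constants.
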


\begin{proof}

$i)$ By Fact \ref{fucnega} ii), we have $P^{ad}_{\q,M}(n)=H^{ad}_{\q,M}(n)\ge 0$ for all $n\gg 0$. Thus by Lemma \ref{lm31}, we have  
$$
\begin{aligned}
0\leq P^{ad}_{\q,M}(n) &=\binom{n+d-1}{d-1}( (-1)^{d-1}e_1(\q;M)-\adeg_{d-1}(\q;M))+ \text { lower terms }\\ 
& \leq \binom{n+d-1}{d-1}I(M/M_1)+ \text { lower terms }.
\end{aligned}$$
Therefore we have $$0\leq (-1)e_1(\q;M)-\adeg_{d-1}(\q;M)\leq I(M/M_1),$$
as requested.

 $ii)$ and $iii)$. Now we proceed by induction on $d$ to show that $ii)$ and $iii)$. Set $r=\reg(G_{\q}(M))$. 
 The case $d=2$,  we have
$$e_2(\Q;M)=H^{ad}_{\Q,M}(n)+\ell(H^0_{\m}(M))+(e_1(\Q;M)+\adeg_1(\Q;M))\binom{n+1}{1}$$
for all $n\geq r+1$.
Thus by Lemma \ref{lm31} and Theorem \ref{fucnega}  we have 
$$\begin{aligned}\mid e_2(\Q;M)\mid &\leq \sum \limits^{t}_{i=0}\binom{n + d _i-1} {d_i -1}I({M}_i/M_{i+1})+I(M/M_1)(n+1)\\
&\leq 2(n+1)^{2-1}I(\calF,M).
\end{aligned}$$
for all $n\geq r+\binom {r +1} {1} I(\calF, M)+d$. \\
Now choose $n=(C+1)^{2-1}I(\calF, M)+d+C+1$. Then $n\geq r+\binom {r +1} {1} I(\calF, M)+d$ by Fact \ref{lm11} i). 
Therefore we have
$$\begin{aligned}\mid e_2(\Q;M)\mid &\leq 2^{2-1}\left((C+1)^{2-1}I(\calF, M)+d+C+1+1\right)I(\calF,M)\\
&= 2^{d-1}\left((C+1)^{d-1}I(\calF, M)+d+C+2\right)^{d-1}I(\calF,M),
\end{aligned}$$
as required.

Now suppose that $d>2$ and that our assertion holds true for $d-1$.
We have by Lemma \ref{lms-e}$(ii)$ that 
$$e_i(\Q;M)=e_i(\Q;\overline M)$$ for all $i=0,\ldots,d-1$ and
$(-1)^de_d(\Q;M)=(-1)^de_d(\Q;\overline M)+\ell(W)$. Moreover, by Fact \ref{rm_inv}$(i)$ we have$$I(\overline\calF,\overline M)+\ell(W)=I(\calF,M).$$
Consequently, $C_{\calF}\geq C_{\overline\calF}$.
Therefore we can assume $W=0$.
Recall that $\underline{x}=x_1,\ldots,x_d$ is a distinguished system of parameters of $M$ with respect to $\calF$ such that $\q=(\underline{x})$ and $x=x_1$ is a superficial element of $M$ for $\q$.
Hence $x$ is a regular element of $M$ since $W=0$. So by Lemma \ref{lms-e}$(i)$,
\begin{flushright}
$e_i(\Q;M)=e_i(\Q;M/xM) \hspace{5cm}(1)$
\end{flushright}
for all $i=0,\ldots,d-1$.
On the other hand, it follows from Lemma \ref{lmf-a} and Fact \ref{rm_inv} $ ii)$ that
we have
\begin{flushright}
$\adeg_{d-i}(\Q;M)=\adeg_{d-1-i}(\Q;M/xM) \hspace{3cm}(2)$
\end{flushright} 
 for all $i=0,\ldots,d-2$
 and
\begin{flushright}$I(\calF/x\calF,M/xM)\leq I(\calF,M).\hspace{5cm}(3)$ \end{flushright}
Set $C_x=C_{\calF/x\calF}$. It follows that
\begin{flushright}$C_x=C_{\calF/x\calF}\leq C_{\calF}=C.\hspace{5cm}(4)$ \end{flushright}
Note that $x_2,\ldots,x_d$ is a distinguished system of parameters of sequentially generalized Cohen-Macaulay  $M/xM$ with respect to $\calF/x\calF\in \calF(M/xM)$.
Therefore it follows from the inductive hypothesis and $(1)-(4)$ that we have
$$\begin{aligned}\mid (-1)^ie_i(\Q;M)&- \adeg_{d-i}(\Q;M)\mid\\
&=\mid (-1)^ie_i(\Q;M/xM)- \adeg_{d-1-i}(\Q;M/xM)\mid\\
&\leq 2^{i-1}\left((C_x+1)^{d-2}I(\calF/x\calF,M/xM)+(d-1)+C_x+2\right)^{i-1}I(\calF/x\calF,M/xM)\\
&\leq 2^{i-1}\left((C+1)^{d-1}I(\calF,M)+d+C+2+I(\calF,M)\right)^{i-1}I(\calF,M)
\end{aligned}$$
for all $i=2,\ldots,d-2$.  

Fact, if $d_{t-1}>1$ then $\adeg_1(\q;M)=0$ by Lemma \ref{lm2}. So
$$\begin{aligned} \mid (-1)^{d-1}&e_{d-1}(\Q;M)-\adeg_1(\Q;M)\mid\\
&=\mid (-1)^{d-1}e_{d-1}(\Q;M/xM)\mid \\
&\leq 2^{(d-1)-1}\left((C_x+1)^{d-2}I(\calF/x\calF,M/xM)+(d-1)+C_x+2\right)^{(d-1)-1}I(\calF/x\calF,M/xM)\\
&\leq 2^{d-1}\left((C+1)^{d-1}I(\calF,M)+d+C+2\right)^{d-1}I(\calF,M).
\end{aligned}$$
The last inequality is followed by $(3)$ and $(4)$.

Now we can assume that $d_{t-1}=1$. Set $\widetilde M =M/D_{t-1}$ and  $\widetilde N=(N+D_{t-1})/D_{t-1}$ for all submodules $N$ of $M$.
Then  $\widetilde M$ is sequentially generalized Cohen-Macaulay, 
$\widetilde{\calF}=\{\widetilde {M_i}\}_{i=0}^{t-1} \in\calF(\widetilde {M})$ and $\q$ is also a distinguished parameter ideal of $\widetilde{M}$ with respect to $\widetilde {\calF}$.
Note that $x_1,\ldots,x_d$ is a distinguished system of parameters of sequentially generalized Cohen-Macaulay  $\widetilde M $ with respect to $\widetilde \calF$.
Using similar above arguments, it follows from $H^0_\fkm(\widetilde M)=0$ that we show that results $1)-4)$ for module $\widetilde M$.

  By Fact \ref{lm11} i), we have $$ \reg(G_{\q}(\widetilde{M}))\leq \widetilde C:=(3I(\widetilde {\calF},\widetilde {M}))^{d!}-2I(\widetilde{\calF},\widetilde {M}).$$ 
Since  
$(M_i\cap D_{t-1})/(M_{i+1}\cap D_{t-1})=(M_{i+1}+M_i\cap D_{t-1})/M_{i+1}$ is a submodule of the module $D_{t-1}/M_{t-1}$ of finite length for all $i=0,...,t-2$, we have
$$\begin{aligned}
I(\calF,M)=&\sum \limits^{t-2}_{i=0}I(M_i/M_{i+1})+I(M_{t-2}/M_{t})+\ell(M_t)\\
&\geq\sum \limits^{t-2}_{i=0}\left(I(M_i/M_{i+1})-l((M_i\cap D_{t-1})/(M_{i+1}\cap D_{t-1}))\right)+I(M_{t-1})\\
&=\sum \limits^{t-2}_{i=0}I((M_i+D_{t-1})/(M_{i+1}+D_{t-1}))+I(M_{t-1})\\
&=I(\widetilde{\calF},\widetilde{M})+I(M_{t-1}).
\end{aligned}$$
This implies that $\widetilde{C} \leq C $. 
We have by Lemma \ref{lms-e}$(ii)$ and Lemma \ref{lm2} that 
$$(-1)^{d-1}e_{d-1}(\q,M)-\adeg_1(\q,M)=(-1)^{d-1}e_{d-1}(\q,\widetilde M).$$
Therefore it follows from the inductive hypothesis and similar results $1)-4)$ for module $\widetilde M$  that we have
$$\begin{aligned} \mid (-1)^{d-1}e_{d-1}(\Q;M)-&\adeg_1(\Q;M)\mid=\mid (-1)^{d-1}e_{d-1}(\Q;\widetilde M)\mid=\mid (-1)^{d-1}e_{d-1}(\Q;\widetilde M/x \widetilde M)\mid \\
&\leq 2^{d-2}\left((\widetilde C_x+1)^{d-1}I(\widetilde\calF/x\widetilde\calF,\widetilde M/x\widetilde M)+d-1+\widetilde C_x+2\right)^{d-1}I(\widetilde\calF/x\widetilde\calF,\widetilde M/x\widetilde M)\\
&\leq 2^{d-1}\left((\widetilde C+1)^{d-1}I(\widetilde{\calF},\widetilde M)+d+\widetilde C+2\right)^{d-1}I(\widetilde{\calF},\widetilde M)\\
&\leq 2^{d-1}\left((C+1)^{d-1}I(\calF,M)+d+C+2\right)^{d-1}I(\calF,M)
\end{aligned}$$
where $\widetilde C_x=C_{\widetilde\calF/x\widetilde\calF}$.

Now,  we have
 $$ (-1)^de_d(\Q;M)=H^{ad}_{\Q,M}(n)- \sum\limits_{i=1}^{d-1}\left((-1)^ie_i(\Q;M)-\adeg_{d-i}(\Q;M)\right)\binom{n+d-i}{d-i}, $$
 for all $n\geq r+1$. 
Furthermore, for all $n\geq r+\binom {r +1} {1} I(\calF, M)+d$, by Lemma \ref{lm31} and Fact \ref{fucnega}, we have 
 $$\begin{aligned}\mid e_d(\Q;M)\mid \leq &\sum \limits^{t-1}_{i=0}\binom{n + d _i-1} {d_i -1}I({M}_i/M_{i+1})+\mid (-1)e_1(\Q;M)-\adeg_{d-1}(\Q;M)\mid \binom{n+d-1}{d-1}\\
&+\sum\limits_{i=2}^{d-1}\mid (-1)^ie_i(\Q;M)-\adeg_{d-i}(\Q;M)\mid \binom{n+d-i}{d-i}\\
&\leq \sum \limits^{t-1}_{i=0}(n+1)^{d-1}I({M}_i/M_{i+1})+(n+1)^{d-1}I(\calF,M)\\
&+\sum\limits_{i=2}^{d-1} 2^{i-1}\left((C+1)^{d-1}I(\calF,M)+d+C+2\right)^{i-1}I(\calF,M)(n+1)^{d-i},
\end{aligned}$$
where the last inequality is followed by $i)$, $ii)$. 
Choose $n=(C+1)^{d-1}I(\calF, M)+d+C+1$ and note that $n\geq r+\binom {r +d-1} {d-1} I(\calF, M)+d$. Then we have
$$\begin{aligned}\mid e_d(\Q;M)\mid &\leq 2\left((C+1)^{d-1}I(\calF,M)+d+C+2\right)^{d-1}I(\calF,M)\\
&+\sum\limits_{i=2}^{d-1} 2^{i-1}\left((C+1)^{d-1}I(\calF,M)+d+C+2\right)^{d-1}I(\calF,M)\\
&=2^{d-1}\left((C+1)^{d-1}I(\calF,M)+d+C+2\right)^{d-1}I(\calF,M).
\end{aligned}$$
as required.
\end{proof}

Now, we are in a position to prove the main theorem in this section.
\begin{proof}[Proof of Theorem \ref{thm1}] 
This is now immediately seen from Theorem \ref{claim39}. 
\end{proof}

We close this section with the following example, which shows that the condition $\calF\in \calF(M)$ in Theorem \ref{thm1} and  \ref{claim39}  can not be omitted whenever $M$ is sequentially Cohen-Macaulay.
\begin{example}\label{ex1}
Let $R= k[[X, Y,Z]]$ be the formal power series ring over a field $k$. Let 
$M=k[[X, Y,Z]] \oplus (k[[X, Y,Z]]/(Z^2))$
be $R$-module. For an integer $m\ge 1$, set $\q_m=(X^m,Y^m,Z)$.
Then we have the following.
\begin{enumerate}[$i)$] 
\item $M$ is sequentially Cohen-Macaulay of dimension $2$.
\item $\q$ is  a parameter ideal of $M$ but $\q$ is not a distinguished parameter ideal with respect to  $F\in\calF(M)$.
\item $H^{ad}_{\Q_m,M}(n)=-m^2\binom{n+1}{1}$ for all $m,n >1$. Hence the set $\Lambda_1(M)$ is finite but the set $\calP_{\calF}(M)$ and $\Lambda_2(M)$ is infinite.
\end{enumerate}

\end{example}

\section{Characterization of the finiteness of the set $\calP_\calD(M)$}

In this section, we give a characterization of sequentially generalized Cohen-Macaulay modules. In particular, we have
\begin{theorem}\label {thmain}
Let $R$ be a homomorphic image of a Cohen-Macaulay local ring. Then the following statements are equivalent:
\begin{enumerate}[$i)$] 

\item $M$ is  sequentially generalized Cohen-Macaulay.
\item The set $\calP_{\calF}(M)$ is finite for all $\calF \in \calF(M)$.
\item The set $\calP_{\calF}(M)$ is finite for some $\calF \in \calF(M)$.
\item The set $\calP_{\calD}(M)$ is finite.
\end{enumerate}
\end{theorem}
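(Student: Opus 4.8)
The four conditions separate into an inexpensive cycle and one essential implication. The implication $(i)\Rightarrow(ii)$ is exactly Theorem~\ref{thm1}. Since $\ell(D_i/D_i)=0<\infty$ for every $i$, the dimension filtration satisfies $\calD\in\calF(M)$, so $\calP_\calD(M)$ is nothing but $\calP_\calF(M)$ for the choice $\calF=\calD$; consequently $(ii)\Rightarrow(iv)$ and $(ii)\Rightarrow(iii)$ are mere specializations and $(iv)\Rightarrow(iii)$ is immediate. Everything therefore rests on the converse $(iii)\Rightarrow(i)$. I would first record that, an adjusted Hilbert--Samuel polynomial being determined by its integer coefficients $a_i(\q;M)=(-1)^ie_i(\q;M)-\adeg_{d-i}(\q;M)$, the finiteness of $\calP_\calF(M)$ is equivalent to the boundedness $\sup_{\q}|a_i(\q;M)|<\infty$ for every $i=1,\dots,d$, where $\q$ ranges over the distinguished parameter ideals of $M$ with respect to $\calF$.

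I would prove $(iii)\Rightarrow(i)$ in its contrapositive form: if $M$ is not sequentially generalized Cohen-Macaulay then, for every $\calF\in\calF(M)$, some coefficient $a_i(\q;M)$ is unbounded. Let $C_j=D_j/D_{j+1}$ denote the quotients of the dimension filtration; each $C_j$ is unmixed of dimension $d_j$, and by the very definition $M$ fails to be sequentially generalized Cohen-Macaulay precisely when some $C_j$ is not generalized Cohen-Macaulay. Choose the smallest index $k$ with $C_k$ not generalized Cohen-Macaulay, so that $C_0,\dots,C_{k-1}$ are generalized Cohen-Macaulay and $M/D_k$ is sequentially generalized Cohen-Macaulay. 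A module of dimension at most one that is unmixed is Cohen-Macaulay, hence $d_k\ge 2$. Being unmixed and not generalized Cohen-Macaulay, $C_k$ has, by the characterization of the unmixed case in \cite{GGHOPV,GO} (row three of Table~\ref{table1}), an unbounded Chern coefficient: there are parameter ideals of $C_k$ along which $e_1(\,\cdot\,;C_k)$ is unbounded.

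The plan is then to transport this single unbounded invariant into one adjusted coefficient of $M$. Since $M_k/M_{k+1}$ agrees with $C_k$ up to a submodule of finite length and $d_k\ge2$, one has $e_1(\q;M_k/M_{k+1})=e_1(\q;C_k)$. In the adjusted Hilbert--Samuel polynomial of $M$ this invariant lives in degree $d_k-1$, that is, in the coefficient $a_{d-d_k+1}(\q;M)$: the layers below $C_k$ have dimension $<d_k$ and contribute only to strictly lower degrees, while the layers $C_0,\dots,C_{k-1}$ above it are generalized Cohen-Macaulay and, by Theorem~\ref{claim39} applied to $M/D_k$ together with Lemma~\ref{lms-e}(ii) and Lemma~\ref{lm2}, contribute only bounded amounts to that coefficient. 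The prototype is the top layer $k=0$, where a direct computation with Lemmas~\ref{lm2} and~\ref{lms-e}(ii) collapses the leading coefficient to $a_1(\q;M)=-e_1(\q;C_0)$. Granting the analogous identity $a_{d-d_k+1}(\q;M)=\pm e_1(\q;C_k)+(\text{bounded})$ for general $k$, one finishes by completing the parameter ideals of $C_k$ that blow up $e_1$ to distinguished systems of parameters of $M$ with respect to $\calF$---these can be taken stable under raising the generators to powers, exactly as in the proof of Theorem~\ref{claim39}---whence $a_{d-d_k+1}(\q;M)$ is unbounded and $\calP_\calF(M)$ is infinite.

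The crux, and the step I expect to resist a soft argument, is the identity $a_{d-d_k+1}(\q;M)=\pm e_1(\q;C_k)+(\text{bounded})$ uniformly over all distinguished parameter ideals. The obstruction is structural: Lemma~\ref{lms-e}(ii) relates the Hilbert coefficients of $M$ to those of the quotient $M/D_k$ but remembers the submodule $D_k$ only through its multiplicity $e_0(\q;D_k)$, because the Artin--Rees discrepancy between $\q^{n+1}M\cap D_k$ and $\q^{n+1}D_k$ perturbs precisely the lower-order terms in which $e_1(\q;C_k)$ resides. To control these perturbations one must establish a lower-bound companion to Lemma~\ref{lm31}, bounding the cross terms in $0\to D_k\to M\to M/D_k\to0$ uniformly in $\q$; here I would lean on the uniform regularity bound of Fact~\ref{lm11}(i) and on the estimates underlying Theorem~\ref{claim39}. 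It is essential that the whole family of distinguished parameter ideals be used rather than a single hyperplane section: generalized Cohen-Macaulayness does not descend from $M/xM$ to $M$ for one filter-regular $x$ (the Matlis dual of $H^1_\fkm(M)$ may be one-dimensional with $x$ a parameter on it), so the boundedness hypothesis must be spent across the entire family, exactly as in the unmixed situation that anchors the base of the argument.
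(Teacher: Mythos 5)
Your reduction of the theorem to one hard implication is sound, and the easy arcs are handled correctly (indeed $\calD\in\calF(M)$, so $(ii)\Rightarrow(iv)\Rightarrow(iii)$; note the paper instead closes the cycle via $\calP_{\calD}(M)\subseteq\calP_{\calF}(M)$, i.e.\ $(iii)\Rightarrow(iv)$, and then proves $(iv)\Rightarrow(i)$, which spares you from having to argue for an arbitrary $\calF$). But your route for the hard implication is genuinely different from the paper's and contains a gap that you yourself flag and do not close: the identity $a_{d-d_k+1}(\q;M)=\pm e_1(\q;C_k)+O(1)$ uniformly over distinguished parameter ideals. This is not a technicality one can defer. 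Lemma~\ref{lms-e}$(ii)$ controls $e_j(\q;M)$ versus $e_j(\q;M/D_k)$ only for $j\le d-d_k$, and the coefficient you need involves $e_{d-d_k+1}(\q;M)$ --- exactly one index beyond the lemma's reach, where the Artin--Rees discrepancy between $\q^{n+1}M\cap D_k$ and $\q^{n+1}D_k$ lives. There is a second unclosed step of the same nature: the unboundedness of $e_1(\,\cdot\,;C_k)$ furnished by \cite{GGHOPV,GO} ranges over \emph{all} parameter ideals of $C_k$, whereas you need it over the subfamily induced by distinguished parameter ideals of $M$ with respect to $\calF$; ``completing to distinguished systems, stable under powers'' does not by itself show that this subfamily still witnesses the blow-up.

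The paper avoids both problems by never isolating a single bad layer. It fixes one distinguished system $\underline x$ as in Lemma~\ref{lm41}, spends the finiteness hypothesis only on the family $\{\underline x^{\underline n}\mid \underline n\in\calN(\underline x;M)\}$ to get a uniform bound $C$ on all $|a_i(\q^{\underline n};M)|$, and then proves the quantitative annihilation $\fkm^{C}H^j_{\fkm}(M/D_{k+1})=0$ (Theorem~\ref{thm2}) by induction on $d$: cut by $y=x_1^{n_1}$, use that $y$ is superficial and that the coefficients pass to $M/yM$ via Lemma~\ref{lms-e}$(i)$ and Lemma~\ref{lmf-a}, lift the annihilation through the exact sequences $0\to M/D_{i+1}\xrightarrow{\,y^n\,}M/D_{i+1}\to M/(D_{i+1}+y^nM)\to 0$ using that $n$ and $C$ are independent, and finally convert the bound on $a_i$ into a bound on $\ell(H^1_{\fkm}(M/D_{i+1}))$ via the dd-sequence formula of Lemma~\ref{f-dd}; the conclusion then comes from \cite[Proposition 3.5]{CC2}. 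The key structural difference is that the paper's invariant carried through the induction is $\fkm^{C}$-annihilation of local cohomology (uniform in $\q$), not an asymptotic identity between Hilbert coefficients of $M$ and of a subquotient; this is precisely what circumvents the missing lower-order comparison that your argument requires. To make your approach work you would have to prove that comparison (a lower-bound companion to Lemma~\ref{lm31} controlling $\ell((\q^{n+1}M\cap D_k)/\q^{n+1}D_k)$ uniformly in $\q$), which is not in the paper and is likely of comparable difficulty to the paper's Theorem~\ref{thm2}.
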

\begin{Setting}\label{defNx}
In this section, from now on, we assume that $R$ is a homomorphic image of a Cohen-Macaulay local ring.
 Let $\calF=\{M_i\}_{i=0}^t\in\calF(M)$ and $\underline x =x_1,\ldots,x_d$ be a distinguished system of parameters of $M$. 

\end{Setting}

\begin{notation}\label{defNx}
\begin{enumerate}[$\bullet$] 
\item Set $\q:=(x:=x_1,x_2,\ldots,x_d)$ and $\q_i=(x_1,\ldots,x_i)$ for all $i=1,\ldots,d$
Let $\underline n =(n_1,\ldots,n_d)\in \Bbb N^d$
$\underline x_i^{\underline n_i}=x_{i+1}^{n_{i+1}},\ldots,x_{d}^{n_{d}}$ and $\q_i^{\underline n_i}=(x_{1}^{n_{1}},\ldots,x_{i}^{n_{i}})$.
We set
$$\calN(\underline x;M)=\{ \underline n \in \Bbb N^d  \mid \underline x_i^{\underline n_i} \text{ is a distinguished system of parameters of }
M/\q_i^{\underline n_i}M \text{ for all } 0\le i\le d-1\}.$$
\item Let $d_{i+1}< k\leq d_{i}$ for some $i=0,\ldots,t-1$. Then
$\calF/\q_k\calF:=\{(M_i+\q_kM)/\q_kM\}_{i=0}^s$
is a filtration of submodules of $M$ satisfying the dimension condition. The length $s$ of a filtration of submodules $\calF/\q_k\calF$ of $M$ is $i$, if $k=d_i$ and $i+1$, otherwise. 
Furthermore, $x_{k+1},\ldots.,x_d$ is a distinguished system of parameters of $M/\q_kM$ with respect to $\calF/\q_k\calF$.
 Stipulating $x_{0}=0$ and $\calF_0=\calF$, 

\end{enumerate}
\end{notation}

\begin{lemma}\label{p1}
Suppose that $d\geq 2$ and $\underline{x}$ is a distinguished system of parameters  of $M$ such that $\calN(\underline {x};M)\ne \emptyset$. Then we have
\begin{enumerate}[$i)$] 
\item $\underline x^{\underline n}$ is a d-sequence on $M$ for all  $\underline n\in \calN(\underline x; M)$. 
\item  $(n_1,n_{2}m_{2},\ldots,n_{d}m_{d})\in \calN(\underline {x};M)$  for all $(m_{2},\ldots,m_d) \in \calN(\underline x_1^{\underline n_1}; M/x_1^{n_1}M)$.
\item If $M$ is a sequentially generalized Cohen-Macaulay module, then 
there is an $\underline n\in \calN(\underline x;M)$ such that $\underline x^{\underline n}$ is a dd-sequence on $M$.
\end{enumerate}
\end{lemma}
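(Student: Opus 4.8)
The plan is to handle the three parts in the order (ii), (i), (iii), since (ii) is purely formal, (i) rests on a single structural claim about distinguished systems, and (iii) combines (i)--(ii) with the sequentially generalized Cohen--Macaulay hypothesis.

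For (ii) I would just unwind the definition of $\calN$. Write $e_1=n_1$ and $e_j=n_jm_j$ for $j\ge 2$. The membership $(e_1,\ldots,e_d)\in\calN(\underline x;M)$ demands, for each $0\le i\le d-1$, that $x_{i+1}^{e_{i+1}},\ldots,x_d^{e_d}$ be a distinguished system of parameters of $M/(x_1^{e_1},\ldots,x_i^{e_i})M$. For $i\ge 1$ these are precisely the conditions recorded in $(m_2,\ldots,m_d)\in\calN(\underline x_1^{\underline n_1};M/x_1^{n_1}M)$, after identifying $M/(x_1^{e_1},\ldots,x_i^{e_i})M$ with the corresponding quotient of $M/x_1^{n_1}M$. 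The case $i=0$ asks that $x_1^{n_1},x_2^{n_2m_2},\ldots,x_d^{n_dm_d}$ be a distinguished system of parameters of $M$; this follows from the case $i=0$ of $\underline n\in\calN(\underline x;M)$ together with the elementary fact that raising exponents preserves distinguishedness, since $x_j^{a}D_\ell\subseteq x_jD_\ell$ keeps the equalities $(x_{d_\ell+1},\ldots,x_d)D_\ell=0$ intact.

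For (i) I would induct on $d$ (running down to $d=1$), using the standard reduction that $\underline x^{\underline n}=y_1,\ldots,y_d$ with $y_j=x_j^{n_j}$ is a d-sequence on $M$ if and only if $0:_My_1y_k=0:_My_k$ for all $k\ge 1$ and $y_2,\ldots,y_d$ is a d-sequence on $M/y_1M$; the conditions of index $i\ge 2$ transport verbatim because $y_1M\subseteq(y_1,\ldots,y_{i-1})M$. Since $\underline n\in\calN(\underline x;M)$ restricts at once to $(n_2,\ldots,n_d)\in\calN(x_2,\ldots,x_d;M/x_1^{n_1}M)$, the second clause holds by induction (at $d=1$ it is vacuous). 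Everything thus reduces to the first-element conditions, which I would derive from the claim that for \emph{any} distinguished system of parameters $z_1,\ldots,z_d$ of $M$ one has $z_kM\cap W=0$ for all $k$, where $W=H^0_\fkm(M)=D_t$. Indeed, $y_1$ avoids every non-maximal associated prime (being a parameter of each $C_\ell=D_\ell/D_{\ell+1}$), so $0:_My_1=W$, and then $0:_My_1y_k=0:_My_k$ is exactly $y_kM\cap W=0$. To prove the claim I descend along $\calD$: a one-line support argument shows every parameter avoids the top-dimensional minimal primes of the unmixed module $C_0=M/D_1$, so $z_k$ is a nonzerodivisor there; as $W\subseteq D_1$, any $w=z_km\in W$ forces $m\in D_1$, i.e. $w\in z_kD_1$. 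Repeating on each $D_j$, whenever $k\le d_j$ the parameters $z_1,\ldots,z_{d_j}$ form a system of parameters of the unmixed $C_j$, so $z_k$ is a nonzerodivisor and $w\in z_kD_{j+1}$; and once $k>d_j$ the distinguished condition gives $z_kD_j=0$, whence $w=0$. Since $d_j$ strictly decreases below $k$ (already $d_t=\dim W\le 0<k$), this terminates with $w=0$.

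For (iii) I would invoke the sequentially generalized Cohen--Macaulay hypothesis to choose $\underline n$ uniformly large. Each factor $C_i=D_i/D_{i+1}$ is generalized Cohen--Macaulay, so $\bigoplus_{j<d_i}H^j_\fkm(C_i)$ has finite length; taking each $n_i$ past the annihilation threshold of these finitely many modules makes $\underline x^{\underline n}$ ``standard'' on every piece and keeps it standard under further powers. In particular $\underline n\in\calN(\underline x;M)$, and applying (ii) at every level of the filtration gives $(n_1a_1,\ldots,n_da_d)\in\calN(\underline x;M)$ for all $\underline a$; by (i) each such power is a d-sequence on $M$, and the same reasoning applied to the quotients $M/(x_{i+1}^{n_{i+1}a_{i+1}},\ldots,x_d^{n_da_d})M$ --- again sequentially generalized Cohen--Macaulay with the expected dimension filtration --- supplies the remaining conditions, which together are the definition of a dd-sequence. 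The main obstacle is the claim underlying (i): making the descent along $\calD$ rigorous, simultaneously using that a parameter is a nonzerodivisor on each graded piece it does not annihilate and that the distinguished condition makes it annihilate the lower pieces, so that the $\fkm$-torsion $W$ can never lie in the image of $z_k$; for (iii) the delicate point is the uniformity of a single $\underline n$ over all exponent vectors $\underline a$, which is exactly what the finiteness of the local cohomology of the generalized Cohen--Macaulay factors provides.
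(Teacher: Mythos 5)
Parts $i)$ and $ii)$ of your proposal are correct. For $ii)$ your unwinding matches what the paper treats as immediate from the definition (the only point to check is the $i=0$ condition, and indeed $x_j^{n_j}D_\ell=0$ forces $x_j^{n_jm_j}D_\ell=0$). For $i)$ you take a genuinely different and more self-contained route: the paper observes that each truncated system, being distinguished on $M/\q_{i-1}^{\underline n_{i-1}}M$, kills $H^0_{\fkm}$ of that quotient, so the colon $\q_{i-1}^{\underline n_{i-1}}M:x_i^{n_i}$ equals the saturation $\bigcup_n(\q_{i-1}^{\underline n_{i-1}}M:(\underline x^{\underline n})^n)$, and then invokes Trung's criterion \cite[Theorem 1.1(vii)]{T}; you instead peel off $y_1$ by induction and verify the first-element colon conditions directly via $0:_My_1=W$ and $y_kM\cap W=0$, proved by descending along $\calD$ using that each $C_j$ is unmixed and that $z_kD_j=0$ once $k>d_j$. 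Your descent is sound (note only that in the inductive step one should apply the hypothesis to the system $x_2^{n_2},\ldots,x_d^{n_d}$ with exponent vector $(1,\ldots,1)$, since $x_2,\ldots,x_d$ itself need not be distinguished on $M/x_1^{n_1}M$); what your route buys is independence from the cited d-sequence characterization, at the cost of redoing by hand what that characterization packages.

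Part $iii)$ has a genuine gap. A dd-sequence requires, for \emph{all} exponent vectors $\underline a$ and all $i$, that $x_1^{n_1a_1},\ldots,x_i^{n_ia_i}$ be a d-sequence on the quotient $M/(x_{i+1}^{n_{i+1}a_{i+1}},\ldots,x_d^{n_da_d})M$ by a \emph{terminal} segment, whereas $\calN(\underline x;M)$ and your parts $i)$--$ii)$ only control quotients by initial segments. Your one-line assertion that these terminal quotients are ``again sequentially generalized Cohen--Macaulay with the expected dimension filtration'' is exactly the hard point: for a merely distinguished system one does not know $(x_{d_i+1},\ldots,x_d)M\cap D_i=0$, so the filtration of such a quotient is not under control; this is why the paper first invokes Artin--Rees to replace $\underline x$ by powers forming a \emph{good} system of parameters, and then cites the structural results \cite[Theorem 3.8, Corollary 3.9]{CC2} to produce the dd-sequence and \cite[Corollary 3.7]{CC1} to deduce membership in $\calN(\underline x;M)$. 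Your ``annihilation threshold'' heuristic (choosing $n_i$ to kill the finite-length local cohomologies of the $C_\ell$) is the right intuition behind those theorems but does not by itself verify either the d-sequence conditions on the terminal quotients or their uniformity in $\underline a$; nor does it justify the claim that the resulting $\underline n$ lies in $\calN(\underline x;M)$, which again needs the good-system-of-parameters reduction. As written, $iii)$ is a sketch of the statement of the cited results rather than a proof.
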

\begin{proof} $i)$. Let $\underline n=(n_1,\ldots,n_d) \in \calN(\underline x; M)$.  Then $\underline x_{i-1}^{\underline n_{i-1}}$ is a distinguished system of parameters of $M/\q_{i-1}^{\underline n_{i-1}}M$.
 Thus $(\underline x_{i-1}^{\underline n_{i-1}})H^0_{\m}(M/\q_{i-1}^{\underline n_{i-1}}M)=0.$ Therefore
$\underline x ^{\underline n} $ is a filter regular $M$-sequence. Moreover,
$$\q_{i-1}^{\underline n_{i-1}}M:x_{i}^{n_i}=H^0_{\fkm}(M/\q_{i-1}^{\underline n_{i-1}}M)=\bigcup\limits_{n=1}^{\infty}(\q_{i-1}^{\underline n_{i-1}}M:(\underline x^{\underline n})^n)$$
for all $i=1,\ldots,d$. So $\underline x ^{\underline n} $ is a d-sequence by \cite[Theorem 1.1$(vii)$]{T}.

$ii)$. This is now immediately seen from the definition of the distinguished system of parameters.

$iii)$. By the Artin-Rees Lemma, there are positive integers $m_1,\ldots,m_d$ such that $x_1^{m_1},\ldots,x_d^{m_d}$ is a good system of parameters of $M$.
We have by \cite[Theorem 3.8, Corollary 3.9]{CC2} that there are positive integers $k_1,\ldots,k_d$ such that
 $x_1^{n_1},\ldots,x_d^{n_d}$ is a dd-sequence on $M$, where $n_i=m_ik_i$ for all $i=1,\ldots,d$. 
We have $(n_1,\ldots,n_d)\in \calN(\underline x; M)$ by \cite[Corollary 3.7]{CC1} and the definition of dd-sequence.
\end{proof}

\begin{lemma}\label{lm41}There is a distinguished system of parameters $\underline x= x_1,\ldots,x_d$ of $M$ such that
$\calF/\q_i\calF\in \calF(M/\q_iM)$ for all $\calF\in \calF(M)$ and $i=1,\ldots,d$.
Moreover, $\calN(\underline x;M) \ne \emptyset$.
\end{lemma}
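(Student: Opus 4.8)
The plan is to take $\underline x$ to be a \emph{good} system of parameters of $M$ and to deduce everything from the fact that goodness is inherited by the reductions $M/\q_k M$. Recall that a good system of parameters is distinguished and that one exists: starting from any distinguished system of parameters and raising its entries to suitable powers, the Artin--Rees Lemma produces a good one (as in the proof of Lemma \ref{p1}$(iii)$), and since the residue field is infinite one may equally build it directly by downward induction with prime avoidance against the associated primes of the quotients $D_j/D_{j+1}$. Fix such a good $\underline x=x_1,\ldots,x_d$, so $(x_{d_j+1},\ldots,x_d)M\cap D_j=0$ for every $j$. This property passes to powers, since $(x_{d_j+1}^{n_{d_j+1}},\ldots,x_d^{n_d})M\cap D_j\subseteq (x_{d_j+1},\ldots,x_d)M\cap D_j=0$, so the whole argument is stable under the passage to powers built into $\calN(\underline x;M)$.

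The technical core is to describe the dimension filtration of $M/\q_k M$ for $d_{i+1}<k\le d_i$. First I would observe that goodness forces $x_1,\ldots,x_{d_j}$ to be a system of parameters of $D_j$ and $x_{d_j+1},\ldots,x_d$ to annihilate $D_j$, because $(x_{d_j+1},\ldots,x_d)D_j\subseteq (x_{d_j+1},\ldots,x_d)M\cap D_j=0$, whence $(x_1,\ldots,x_d)D_j=(x_1,\ldots,x_{d_j})D_j$ is $\fkm$-primary on $D_j$. From this it follows that $\dim (D_j+\q_k M)/\q_k M=d_j-k$ for $j\le i$, while for $j\ge i+1$ the image $(D_j+\q_k M)/\q_k M\cong D_j/(D_j\cap \q_k M)$ has finite length, being a quotient of $D_j/\q_{d_j}D_j$. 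Controlling the intersection $\q_k M\cap D_j$ through the good property (filter regularity), I would then show that each image $(D_j+\q_k M)/\q_k M$ has finite colength in the corresponding term of the dimension filtration $\calD'$ of $M/\q_k M$ and that $\calD'$ has length exactly the $s$ recorded in Notation \ref{defNx}. This is precisely the assertion $\calD/\q_k\calD\in\calF(M/\q_k M)$.

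Granting this, the membership $\calF/\q_i\calF\in\calF(M/\q_i M)$ for an arbitrary $\calF=\{M_j\}\in\calF(M)$ follows by a finite-length perturbation: here $M_j\subseteq D_j$ with $\ell(D_j/M_j)<\infty$, so $(M_j+\q_k M)/\q_k M$ has finite colength in $(D_j+\q_k M)/\q_k M$ and hence in the terms of $\calD'$; combined with the structural step this yields $\calF/\q_k\calF\in\calF(M/\q_k M)$ for all $k=1,\ldots,d$. For the final assertion, since $\underline x$ is good the reduced system $x_{i+1},\ldots,x_d$ is again a good, hence distinguished, system of parameters of $M/\q_i M$ with respect to $\calD'$ for each $0\le i\le d-1$; this is \cite[Corollary 3.7]{CC1}. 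Therefore $(1,\ldots,1)\in\calN(\underline x;M)$, so $\calN(\underline x;M)\ne\emptyset$.

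The main obstacle is the structural step of the second paragraph: identifying the dimension filtration of $M/\q_k M$ with the images of $\calD$ up to finite length. The delicate points are the control of the intersection $\q_k M\cap D_j$, where it is genuinely the good property and not mere distinguishedness that is needed, together with the boundary bookkeeping separating the cases $k=d_i$ and $d_{i+1}<k<d_i$ so that the length of $\calD/\q_k\calD$ matches that of the dimension filtration of the quotient. Once these finite-length comparisons are in hand both conclusions are immediate; note that the weaker statement that $\calF/\q_k\calF$ merely satisfies the dimension condition is already recorded in Notation \ref{defNx}, so the new content of the lemma is exactly the finiteness of these colengths.
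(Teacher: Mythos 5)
Your plan rests on two steps, and the second one is where the proof genuinely breaks. Membership of $\underline n$ in $\calN(\underline x;M)$ requires $x_{i+1}^{n_{i+1}},\ldots,x_d^{n_d}$ to be a distinguished system of parameters of $M/\q_i^{\underline n_i}M$ \emph{with respect to the dimension filtration of that quotient}, not merely with respect to the image filtration $\calD/\q_i\calD$ (for the latter, distinguishedness is automatic, as recorded in Notation \ref{defNx}, and the second assertion would be vacuous). The terms $D'_k$ of the dimension filtration of $M/\q_iM$ contain the images $\overline{D_k}=(D_k+\q_iM)/\q_iM$ with quotient $D'_k/\overline{D_k}$ of finite length --- that is exactly the content of the first assertion --- but in general nonzero. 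Goodness of $\underline x$ gives $(x_j\mid j>d_k)M\cap D_k=0$, hence $x_j\overline{D_k}=0$ for $j>d_k$, but it gives no control over the action of $x_j$ on the excess $D'_k/\overline{D_k}$: one only gets $x_j^{N}D'_k\subseteq\overline{D_k}$ and hence $x_j^{N+1}D'_k=0$ once $\fkm^{N}$ kills that finite-length module. So $(1,\ldots,1)$ need not lie in $\calN(\underline x;M)$, and \cite[Corollary 3.7]{CC1} cannot be carrying the claim you attribute to it; note that the paper itself, even under the stronger hypothesis that $M$ is sequentially generalized Cohen--Macaulay, obtains an element of $\calN(\underline x;M)$ only after raising a good system of parameters to powers forming a dd-sequence (Lemma \ref{p1}$(iii)$). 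The paper's actual proof of $\calN(\underline x;M)\ne\emptyset$ is an induction on $d$: since $\calD/x_1\calD\in\calF(M/x_1M)$, suitable powers $x_2^{n_2},\ldots,x_d^{n_d}$ annihilate the finite-length discrepancies and become distinguished for $M/x_1M$, and the inductive hypothesis applied to $M/x_1M$ yields $(1,n_2m_2,\ldots,n_dm_d)\in\calN(\underline x;M)$. Your argument is missing precisely this power-raising, and it cannot be avoided.

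A secondary point: the first assertion --- that the dimension filtration of $M/\q_kM$ agrees with $\calD/\q_k\calD$ up to finite length --- is the technical heart, and you only announce that you ``would show'' it. The paper disposes of it by quoting \cite[Lemma 2.7]{CGT}. If you want a self-contained argument you must actually prove that every associated prime of $D'_k/\overline{D_k}$ is $\fkm$, which is where the standing hypothesis that $R$ is a homomorphic image of a Cohen--Macaulay ring enters; goodness of $\underline x$ and control of $\q_kM\cap D_j$ alone do not give this. Your reduction from $\calD$ to an arbitrary $\calF\in\calF(M)$ by finite-length perturbation is fine once that step is granted.
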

\begin{proof}
The first assertion is now immediately seen from  Lemma 2.7 in \cite{CGT}. \\
Now we will prove that $\calN(\underline x; M)\ne \emptyset$ by induction on $d$. The case $d=1$ is obvious. Assume that $d>1$. Then  $x_2,\ldots,x_d$ is a distinguished system of parameters of $M/x_1M$ with respect to $\calD/{x_1}\calD$. Since 
$\calD/{x_1}\calD\in \calF(M/x_1M)$, there are positive integers $n_2,\ldots,n_d$ such that $x_2^{n_2},\ldots,x_d^{n_d}$ is a distinguished system of parameters of $M/x_1M$. 
By the inductive hypothesis, we have $(m_2,\ldots,m_d)\in \calN(x_2^{n_2},\ldots,x_d^{n_d};M/x_1M)$ for some $m_i$. Therefore we have
$(1,n_2m_2,\ldots,n_dm_d)\in \calN(\underline x;M)$, as requested.
\end{proof}

\begin{lemma}\label{lm42}
Assume that  $d\geq 2$ and $\calF=\{M_i\}_{i=0}^t\in\calF(M)$. Then, $H^1_{\fkm}(M/M_{i+1})$ are of finite length for $i = 0, 1, . . . , t - 1$ and $d_i \geq 2$.  
\end{lemma}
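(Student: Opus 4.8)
The plan is to replace the filtration $\calF$ by the dimension filtration $\calD$, whose successive quotients are genuinely generalized Cohen-Macaulay, and then to assemble $M/D_{i+1}$ out of these quotients by short exact sequences. Fix an index $i$ with $d_i\ge 2$. Since $\calF\in\calF(M)$ we have $M_{i+1}\subseteq D_{i+1}$ and $\ell(D_{i+1}/M_{i+1})<\infty$, so in the short exact sequence
$$0\to D_{i+1}/M_{i+1}\to M/M_{i+1}\to M/D_{i+1}\to 0$$
the left-hand module has finite length and hence $H^j_{\fkm}(D_{i+1}/M_{i+1})=0$ for all $j\ge 1$. The associated long exact sequence of local cohomology then yields an isomorphism $H^1_{\fkm}(M/M_{i+1})\cong H^1_{\fkm}(M/D_{i+1})$, so it is enough to show that $H^1_{\fkm}(M/D_{i+1})$ has finite length.

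The key input is that, $M$ being sequentially generalized Cohen-Macaulay, every quotient $D_j/D_{j+1}$ of the dimension filtration is generalized Cohen-Macaulay of dimension $d_j$. Because the $d_j$ are strictly decreasing, $d_j\ge d_i\ge 2$ for all $j\le i$, and therefore $H^1_{\fkm}(D_j/D_{j+1})$ has finite length for each such $j$. I would then induct on $i$. The base case $i=0$ is immediate since $M/D_1=D_0/D_1$ is generalized Cohen-Macaulay of dimension $d_0\ge 2$. For the inductive step I would feed the short exact sequence
$$0\to D_i/D_{i+1}\to M/D_{i+1}\to M/D_i\to 0$$
into the long exact sequence of local cohomology, extracting the exact piece
$$H^1_{\fkm}(D_i/D_{i+1})\to H^1_{\fkm}(M/D_{i+1})\to H^1_{\fkm}(M/D_i).$$
Its outer terms have finite length, the first by the generalized Cohen-Macaulay property and the third by the inductive hypothesis (applicable because $d_{i-1}>d_i\ge 2$); consequently $H^1_{\fkm}(M/D_{i+1})$ is an extension of a finite length module by a finite length module and so is of finite length.

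The one genuinely delicate choice is to route the argument through $\calD$ rather than through $\calF$ directly. The definition of a generalized Cohen-Macaulay filtration only forces $M_{j-1}/M_j$ to be generalized Cohen-Macaulay for $j\le t-1$, leaving the top quotient $M_{t-1}/M_t$ uncontrolled, so a direct induction along $\calF$ would break down precisely at $i=t-1$ when $d_{t-1}\ge 2$. Passing to the dimension filtration, all of whose quotients are generalized Cohen-Macaulay by hypothesis, removes this obstruction, after which the remaining ingredients---vanishing of the higher local cohomology of finite length modules and the stability of finite length under extensions---are entirely routine.
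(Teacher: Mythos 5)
There is a genuine gap: your argument takes as its ``key input'' that $M$ is sequentially generalized Cohen--Macaulay, but that is not a hypothesis of this lemma, and it cannot be. The lemma sits in Section 4, whose standing assumptions are only that $R$ is a homomorphic image of a Cohen--Macaulay local ring and that $\calF\in\calF(M)$ in the sense of the Section 2 notation, i.e.\ $\ell(D_i/M_i)<\infty$ for all $i$. Indeed, the lemma is invoked inside the proof of Theorem \ref{thm2} precisely in order to \emph{deduce} that $M$ is sequentially generalized Cohen--Macaulay (e.g.\ in the base case $d=2$, where ``$M/D_1$ is a generalized Cohen--Macaulay module by Lemma \ref{lm42}''), so assuming that property up front makes the main theorem's implication $iv)\Rightarrow i)$ circular. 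Without that assumption you have no reason to believe $D_j/D_{j+1}$ (or $M_j/M_{j+1}$) is generalized Cohen--Macaulay, and the finiteness of $H^1_{\fkm}(D_j/D_{j+1})$ does not come for free.

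The correct replacement for that input — and the step your proof is missing — is the observation that $\Ass_R(M_i/M_{i+1})\subseteq\Assh_R(M_i/M_{i+1})\cup\{\fkm\}$, which follows from $\calF\in\calF(M)$ and the structure of the dimension filtration, combined with Lemma 3.1 of Goto--Nakamura \cite{GN}: over a homomorphic image of a Cohen--Macaulay local ring, a module whose associated primes are all of maximal dimension or equal to $\fkm$ has finitely generated $H^1_{\fkm}$. Note that your proof never uses the hypothesis that $R$ is a homomorphic image of a Cohen--Macaulay local ring, which is a warning sign, since that hypothesis is exactly what powers this step. The remainder of your argument — reducing from $\calF$ to a finite-length perturbation, and the induction on $i$ via the long exact sequence of local cohomology attached to $0\to M_i/M_{i+1}\to M/M_{i+1}\to M/M_i\to 0$ — matches the paper's proof and is fine once the finite length of $H^1_{\fkm}(M_i/M_{i+1})$ is supplied by the Goto--Nakamura lemma rather than by an unavailable Cohen--Macaulayness assumption.
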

\begin{proof} We first see that $\Ass_R(M_i/M_{i+1})=\Assh_R(M_i/M_{i+1})\cup \{\m\}$. It follows by \cite[Lemm 3.1]{GN} that $H^1_{\fkm}(M_i/M_{i+1})$ are of finite length for all $i=0,\ldots,t-1$ and $d_i\geq 2$.
Now we will prove by induction on $i$. If $i=0$, clearly that $H^1_{\fkm}(M/M_1)$ is of finite length. Assume that $i>0$ and that assertion holds true for $i-1$. Now the short exact sequence 
$$0 \longrightarrow M_i/M_{i+1}\longrightarrow  M/M_{i+1}\longrightarrow  M/M_i\longrightarrow 0$$
induces the long exact sequence $$\ldots\longrightarrow H^1_{\fkm}(M_i/M_{i+1})\longrightarrow H^1_{\fkm}(M/M_{i+1})\longrightarrow H^1_{\fkm}(M/M_{i})\longrightarrow \ldots.$$
If $d_{i} \geq 2$ then $d_{i-1}> d_i\geq 2$. Thus $H^1_{\fkm}(M/M_{i})$ is of finite length by  induction. So $H^1_{\fkm}(M/M_{i+1})$ is of finite length.
\end{proof}

\begin{theorem} \label{thm2} Assume that  $d\geq 2$. Suppose that there is an integer $C$ and a distinguished parameter ideal  $\q=(x_1,\ldots,x_d)$ of $M$ as in Lemma \ref{lm41} such that 
$$ \mid a_{i}(\q^{\underline n};M) \mid \leq C $$
for all $\underline n\in \calN(\underline x;M)$ and $i=0,\ldots,d-1$. Then we have $$\fkm^{C}H^j_{\fkm}(M/D_{k+1})=0$$ for all $j=1,\ldots,d_k-1, d_k\geq 2$ and $k=0,\ldots,t-1$.
\end{theorem}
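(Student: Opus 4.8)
The plan is to argue by induction on $d$, after first reducing to the case $H^0_{\fkm}(M)=0$. For the reduction I replace $M$ by $\overline M=M/W$: by Fact \ref{rm_inv}(i) and Lemma \ref{lms-e}(ii) the coefficients $a_i(\q^{\underline{n}};M)$ with $i\le d-1$ are unchanged, and since $W=D_t\subseteq D_{k+1}$ for every $k\le t-1$ the modules $M/D_{k+1}=\overline M/\overline{D_{k+1}}$ and their local cohomology are literally the same, so I may assume that $x_1$ is regular on $M$ and on each $M/D_i$. Fix $k$ with $d_k\ge 2$ and set $N=M/D_{k+1}$. Since $N$ is the quotient of $M$ by the $(k+1)$-st term of its dimension filtration, the dimension filtration of $N$ has smallest nonzero piece $D_k/D_{k+1}$ of dimension $d_k\ge 2$; in particular $N$ has no filtration piece of dimension $1$, $H^0_{\fkm}(N)=0$, and $x_1$ is $N$-regular.

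The backbone is descent modulo $x_1^{n_1}$. Using the system $\underline x$ of Lemma \ref{lm41} (so that the distinguished structure is preserved under reduction), for $\underline{n}\in\calN(\underline{x};M)$ Lemma \ref{p1}(i),(ii) lets me view $\underline{x}_1^{\underline{n}_1}$ as a distinguished system on $M_1:=M/x_1^{n_1}M$. By Lemma \ref{lms-e}(i) (here $x_1^{n_1}$ is regular, so $e_i(\q^{\underline{n}};M)=e_i(\q^{\underline{n}};M_1)$ for $i\le d-1$) together with the arithmetic-degree shift $\adeg_{(d-1)-i}(\q^{\underline{n}};M_1)=\adeg_{d-i}(\q^{\underline{n}};M)$ of Lemma \ref{lmf-a}, one gets $a_i(\q^{\underline{n}};M_1)=a_i(\q^{\underline{n}};M)$ for all $i\le d-2$; thus $M_1$ satisfies the hypothesis of the theorem in dimension $d-1$ with the same constant $C$, its dimension filtration is $D'_i=(D_i+x_1^{n_1}M)/x_1^{n_1}M$ (dimension dropping by one), and $N/x_1^{n_1}N=(M/x_1^{n_1}M)/D'_{k+1}$. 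The sequence $0\to N\xrightarrow{x_1^{n_1}}N\to N/x_1^{n_1}N\to 0$ then gives a surjection
$$H^{j-1}_{\fkm}\big((M/x_1^{n_1}M)/D'_{k+1}\big)\twoheadrightarrow \big(0:_{H^j_{\fkm}(N)}x_1^{n_1}\big).$$
For $j\ge 2$ (which forces $d_k\ge 3$, so $d'_k=d_k-1\ge 2$) the inductive hypothesis with $k'=k$, $j'=j-1$ yields $\fkm^C H^{j-1}_{\fkm}((M/x_1^{n_1}M)/D'_{k+1})=0$, hence $\fkm^C\big(0:_{H^j_{\fkm}(N)}x_1^{n_1}\big)=0$. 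As every element of the $\fkm$-torsion module $H^j_{\fkm}(N)$ is annihilated by a power of $x_1$, taking the union over a cofinal set of admissible $n_1$ gives $\fkm^C H^j_{\fkm}(N)=0$.

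It remains to anchor the induction at $j=1$ (needed at every dimension, and the only case when $d_k=2$). Here $H^1_{\fkm}(N)$ is already of finite length by Lemma \ref{lm42} applied to the dimension filtration. Since $N$ has no piece of dimension $1$, Lemma \ref{lm2} gives $\adeg_1(\q^{\underline{n}};N)=0$, so $a_{d-1}(\q^{\underline{n}};N)=(-1)^{d-1}e_{d-1}(\q^{\underline{n}};N)$. Combining $e_{d-1}(\q^{\underline{n}};N)=e_{d-1}(\q^{\underline{n}};N/x_1^{n_1}N)$ (Lemma \ref{lms-e}(i), $x_1$ regular) with the isomorphism $\big(0:_{H^1_{\fkm}(N)}x_1^{n_1}\big)\cong H^0_{\fkm}(N/x_1^{n_1}N)$ coming from the same exact sequence, I obtain
$$\ell\big(0:_{H^1_{\fkm}(N)}x_1^{n_1}\big)=a_{d-1}(\q^{\underline{n}};N)-a_{d-1}(\q^{\underline{n}};N/x_1^{n_1}N).$$
Granting that the top adjusted coefficient $a_{d-1}(\q^{\underline{n}};N/x_1^{n_1}N)$ is nonnegative and that the bound transfers to $N$, this forces $\ell\big(0:_{H^1_{\fkm}(N)}x_1^{n_1}\big)\le C$ for all admissible $n_1$; letting $n_1\to\infty$ and using finiteness of $H^1_{\fkm}(N)$ gives $\ell(H^1_{\fkm}(N))\le C$, hence $\fkm^C H^1_{\fkm}(N)=0$.

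The hardest points are the two facts flagged in the anchor. First, transferring $|a_{d-1}(\q^{\underline{n}};-)|\le C$ from $M$ to $N=M/D_{k+1}$ when $d_{k+1}\ge 1$: this requires telescoping Lemma \ref{lms-e}(ii) along the filtration and absorbing the resulting $e_0(\q;D_i/D_{i+1})$ terms into the arithmetic degrees, in the spirit of the elementary computation $a_1(\q;M)=a_1(\q;M/D_1)$ that settles the base case $d=2$. Second, establishing the nonnegativity of the top adjusted Hilbert coefficient $a_{\dim}(\q;-)$, equivalently $\ell(H^0_{\fkm}(N/x_1^{n_1}N))\le a_{d-1}(\q^{\underline{n}};N)$. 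A secondary nuisance is the index bookkeeping matching $d'_k=d_k-1$ across the induction, together with the cofinality of admissible exponents $n_1$ required to pass from the truncations $\big(0:_{H^j_{\fkm}(N)}x_1^{n_1}\big)$ to all of $H^j_{\fkm}(N)$.
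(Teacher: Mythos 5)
Your reduction to $W=0$ and your descent step for $j\ge 2$ (reduce modulo $x_1^{n_1}$, transfer the bound on $a_i$ for $i\le d-2$ via Lemma \ref{lms-e}$(i)$, Lemma \ref{lmf-a} and Lemma \ref{p1}$(ii)$, then use the long exact sequence of local cohomology and let $n_1$ vary) are exactly the paper's argument, modulo the minor point that $(D_{k+1}+x_1^{n_1}M)/x_1^{n_1}M$ is only the dimension filtration of $M/x_1^{n_1}M$ up to finite length, which does not affect $H^j_\fkm$ for $j\ge 1$. The problem is your anchor at $j=1$, where both facts you flag as "granted" are genuine gaps. First, the transfer of the bound $\mid a_{d-1}(\q^{\underline n};M)\mid\le C$ to $N=M/D_{k+1}$: Lemma \ref{lms-e}$(ii)$ only relates $e_j(\q;M)$ to $e_j(\q;M/D_{k+1})$ for $j\le d-\dim D_{k+1}$, so it reaches $e_{d-1}$ only when $\dim D_{k+1}\le 1$. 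For $\dim D_{k+1}\ge 2$ the difference $e_{d-1}(\q;M)-e_{d-1}(\q;M/D_{k+1})$ involves the higher Hilbert coefficients $e_1(\q;D_{k+1}),\dots$ of the submodule, which are controlled by nothing in the hypothesis; "telescoping and absorbing into arithmetic degrees" works for the multiplicity term only, which is precisely why the paper's Lemma \ref{lm2} stops at $e_0$. Second, the nonnegativity of the top adjusted coefficient $a_{\dim}(\q^{\underline n};N/x_1^{n_1}N)$ is not among the available lemmas: Fact \ref{fucnega}$(ii)$ asserts nonnegativity of the adjusted Hilbert \emph{function} for large $n$, and only under the standing assumption that the module is already sequentially generalized Cohen--Macaulay --- which is what this theorem is being used to establish, so invoking it here would be circular.

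The paper closes the $j=1$ case by a different mechanism that avoids both issues. Having proved $\fkm^{C}H^j_\fkm(M/D_{k+1})=0$ for $j\ge 2$ and knowing $\ell(H^1_\fkm(M/D_{k+1}))<\infty$ from Lemma \ref{lm42}, it first concludes that $M$ is sequentially generalized Cohen--Macaulay (via \cite[Proposition 3.5]{CC2}). Then Lemma \ref{p1}$(iii)$ supplies an $\underline n\in\calN(\underline x;M)$ for which $\underline x^{\underline n}$ is a dd-sequence, and Lemma \ref{f-dd} gives the closed formula $a_{d-d_k}(\q^{\underline n};M)=\sum_{j=1}^{d_k}\binom{d_k-1}{j-1}\ell(H^j_\fkm(M/D_k))$, a sum of nonnegative terms in which $\ell(H^1_\fkm(M/D_k))$ appears with coefficient $1$; the hypothesis $\mid a_{d-d_k}(\q^{\underline n};M)\mid\le C$ then yields $\ell(H^1_\fkm(M/D_k))\le C$ directly, with no passage to quotient modules and no separate positivity argument. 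You should replace your anchor step with this argument (or supply independent proofs of your two granted facts, which I do not see how to do from the stated lemmas).
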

\begin{proof} Let $\underline n \in \calN(\underline {x};M)$. We have by Lemma \ref{lms-e}$(ii)$ and Lemma \ref{lm2}$(i)$ that
$$e_i(\underline x ^{\underline n};M)=e_i(\underline x ^{\underline n};M/W),$$ $$\adeg_{d-i}(\underline x ^{\underline n};M)=\adeg_{d-i}(\underline x ^{\underline n};M/W)$$ for all $i=0,\ldots,d-1$,
where $W=H^0_\fkm(M)$. Thus we can assume that $W=0$.

Now we argue by the induction on the dimension $d$ of $M$. In the case $d=2$, $M/D_1$ is a generalized Cohen-Macaulay module by Lemma \ref{lm42}. Therefore $M$ is sequentially generalized Cohen-Macaulay. By Lemma \ref{p1}, $iii)$, there is a $(n_1,n_2)\in \calN(\underline x; M)$ such that $x_1^{n_1},x_2^{n_2}$ is a dd-sequence on $M$.
Therefore, by Lemma \ref{f-dd}, we have 
 $$\ell(H_{\m}^1(M/D_1))=\mid (-1)e_1(x_1^{n_1},x_2^{n_2};M)-\adeg_1(x_1^{n_1},x_2^{n_2};M)\mid\leq C.$$
Hence  $\m^{C}H_{\m}^1(M/D_1)=0$.\\

Fact, suppose that $d>2$ and our assertion holds true for $d-1$. 
Since $\underline n'=(n_1,n_2m_2\ldots,n_dm_d)\in \calN(\underline x; M)$ for all $\underline m=(m_{2},\ldots,m_d) \in \calN(\underline x_1^{\underline n_1}; M/x_1^{n_1}M)$ by Lemma \ref{p1}$(ii)$, 
$\underline x ^{\underline n '}=x_1^{n_1},x_2^{n_2m_2}\ldots,x_d^{n_dm_d}$ is a d-sequence on $M$ because of Lemma \ref{p1}$(i)$. 
Set $y=x_1^{n_1}$ and $\underline y =x_2^{n_2}\ldots,x_d^{n_d}$. Therefore  $y$ is a superficial of $M$ for ideals $(y, \underline y ^{\underline m})$ for all $\underline m \in \calN(\underline y; M/yM)$. 
 It follows by Lemma \ref{lms-e}$(i)$ and Lemma \ref{lmf-a} that
$$
\begin{aligned}
\mid (-1)^ie_i(\underline y^{\underline m} ;M/yM)-\adeg_{d-i-1}(\underline y^{\underline m};M/yM)\mid 
&=\mid (-1)^ie_i(y,\underline y^{\underline m};M)- \adeg_{d-i}(y,\underline y ^{\underline m};M)\mid\\
&=\mid (-1)^ie_i(\underline x ^{\underline n '};M)- \adeg_{d-i}(\underline x ^{\underline n '};M)\mid\leq C,
\end{aligned}
$$
for all  $i=0,\ldots,d-2$ and for all $\underline m \in \calN(\underline y; M/yM)$. 

Now set $\overline M=M/yM$ and and $\overline N=(N+yM)/yM$ for all submodules $N$ of $M$. Let $\calD'=\{D_i'\}_{i=0}^s$ be the dimension filtration of $\overline M$, where $s$ is $t$ if $d_{t-1}>1$ and $t-1$, otherwise.
By the inductive hypothesis, we have $$\fkm^{C}H^j_{\fkm}(\overline M/D^{'}_{i+1})=0$$
for all $\dim D^{'}_{i}=d_i-1\geq 2$ and $j=1,\ldots,d_i-2$. Since  $\calD/{y}\calD\in\calF(M/yM)$ by Lemma \ref{lm41}, we have  $\ell(D_{i+1}^{'}/\overline{D}_{i+1})<\infty$ for all $i=0,\ldots,s-1.$ Therefore we get $H^j_{\fkm}(\overline M/D^{'}_{i+1}) \cong H^j_{\fkm}(\overline M/\overline{D}_{i+1}))$  for all $j\geq 1$
and $i=0,\ldots,s-1$. This implies that
$$\fkm^{C}H^j_{\fkm}(M/(yM+D_{i+1}))=0$$ for all $d_i\geq 3$ and $j=1,\ldots,d_i-2$. 
Since $y$ is $M/D_{i+1}$-regular for all $i = 0, \ldots , t-1$, it follows from short exact sequences 
$$0\longrightarrow \frac{M}{D_{i+1}}\xrightarrow{y^n}\frac{M}{D_{i+1}}\longrightarrow \frac{M}{D_{i+1}+y^nM}\longrightarrow 0$$
that  long sequences
 $$\ldots\longrightarrow H^{j-1}_{\fkm}(\frac{M}{D_{i+1}+y^nM})\longrightarrow H^j_{\fkm}(\frac{M}{D_{i+1}})\xrightarrow{y^n}H^j_{\fkm}(\frac{M}{D_{i+1}})\longrightarrow \ldots$$
 are exact for all $n\ge 1$.
Thus we have $$\m^{C}(0:_{H^j_{\fkm}({M}/{D_{i+1}})}y^n)=0$$ for all $j=2,\ldots,d_i-1$, $d_i\geq 3$ and $n\ge 1$.
Note that $n$ and $C$ are independent of each other. Hence $$\m^{C}H^j_{\fkm}({M}/{D_{i+1}})=0$$
for all $j=2,\ldots,d_i-1$ and $d_i\geq 3$. Moreover, $\ell(H^1_{\m}(M/D_{i+1}))<\infty$ by Lemma \ref{lm42} for all $d_i\geq 2$. So $M$ is a sequentially generalized Cohen-Macaulay module by \cite[Proposition 3.5]{CC2}. 
Therefore there is a $\underline n =(n_1,\ldots,n_d)\in \calN(\underline x; M)$ such that $\underline x ^{\underline n}$ is a dd-sequence on $M$. By Lemma \ref{f-dd},  we havet $\ell(H^1_{\fkm}(M/D_{i+1}))\leq C$ for all $d_i\geq 2$.
 The proof is completed.
\end{proof}
\begin{proof}[Proof of Theorem \ref{thmain}]
$i)\Rightarrow ii)$ is followed by Theorem \ref{thm1}. \\
$ii)\Rightarrow iii)$ is trivial.\\
 $iii)\Rightarrow iv)$ is followed by $\calP_{\calD}(M) \subseteq \calP_{\calF}(M)$ for all $\calF\in \calF(M)$.\\
$iv)\Rightarrow i)$. Let $\underline x= x_1,\ldots,x_d$ be a distinguished system of parameters of $M$ as in Lemma \ref{lm41}.
Since $\calP_{\calD}(M)$ is finite, so is $\{P^{ad}_{\underline x ^{\underline n},M}(n) \mid \underline n \in \calN(\underline x; M)\}$. We have by Theorem \ref{thm2} and \cite[Proposition 3.5]{CC2} that $M$ is  sequentially generalized Cohen-Macaulay, as required.
\end{proof}

\end{document}